\documentclass[11pt]{amsart}
\usepackage{amsfonts}
\pagestyle{plain}


\usepackage{amscd}
\usepackage{amsmath}
\usepackage{amssymb}
\usepackage{amsthm}
\usepackage{epsf}
\usepackage{latexsym}
\usepackage{verbatim}
\usepackage{color}
\input epsf.tex

\usepackage{epstopdf}
\usepackage[pdftex]{graphicx}

\newtheorem{theorem}{Theorem}[section]

\newtheorem{lemma}[theorem]{Lemma}
\newtheorem{corollary}[theorem]{Corollary}

\newtheorem{proposition}[theorem]{Proposition}
\theoremstyle{definition}



\begin{document}
\title{Counting Invariant Components of Hyperelliptic Translation Surfaces}
\author[Kathryn A. Lindsey]{Kathryn A. Lindsey}
\address[Kathryn Lindsey]{ Department of Mathematics, Cornell University\\ Ithaca, NY 14853, USA}
\email{ klindsey@math.cornell.edu}
\maketitle 

\begin{abstract}

The flow in a fixed direction on a translation surface $S$ determines a decomposition of $S$ into closed invariant sets, each of which is either periodic or minimal.  We study this decomposition for translation surfaces in the hyperelliptic connected components $\mathcal{H}^{hyp}(2g-2)$ and $\mathcal{H}^{hyp}(g-1,g-1)$ of the corresponding strata of the moduli space of translation surfaces.  Specifically, we characterize the pairs of nonnegative integers $(p,m)$ for which there exists a translation surface in $\mathcal{H}^{hyp}(2g-2)$ or $\mathcal{H}^{hyp}(g-1,g-1)$ with precisely $p$ periodic components and $m$ minimal components.

This extends results by Naveh (\cite{Naveh}), who obtained tight upper bounds on the numbers of minimal components and invariant components a translation surface in any given stratum may have.   Analogous results for the other connected components of moduli space are forthcoming.  

 \end{abstract}

\section{Introduction}
A translation surface is a  compact, connected, oriented, surface endowed with a flat metric which has a finite number of conical singularities, each of which has a cone angle that is an integer multiple of $2\pi$.  Such a surface has trivial linear holonomy, so the flow in a fixed direction on the surface is defined for every regular point.  The flow in a fixed direction on a translation surface $S$ determines a decomposition of $S$ into a finite number of closed, invariant sets called components.  There are two types of components: minimal components and periodic components.  A periodic component is the closure of a maximal cylinder of periodic orbits.  A minimal component is the closure of a recurrent, non-periodic orbit.   Without loss of generality, we will always assume the flow to be in the positive vertical direction.   For all $g \in \mathbb{N}$, the strata $\mathcal{H}(2g-2)$ and $\mathcal{H}(g-1,g-1)$ of the moduli space of translation surfaces each have a connected component called the hyperelliptic component, denoted by $\mathcal{H}^{hyp}(2g-2)$ and $\mathcal{H}^{hyp}(g-1,g-1)$, respectively. 

\begin{theorem} \label{t:maintheorem}
Fix $g \in \mathbb{N}$.  Let $(p,m)$ be a pair of nonnegative integers, at least one of which is nonzero. 
\begin{enumerate}
\item There exists a translation surface in the hyperelliptic component $\mathcal{H}^{hyp}(2g-2)$ with precisely $p$ periodic components and $m$ minimal components if and only if $$3m+2p-1 \leq 2g.$$  
\item There exists a translation surface in the hyperelliptic component $\mathcal{H}^{hyp}(g-1,g-1)$ with precisely $p$ periodic components and $m$ minimal components if and only if $$3m+2p-2 \leq 2g,$$ except for the case $g=1$. 
\end{enumerate}
\end{theorem}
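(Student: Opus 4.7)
The key tool is the hyperelliptic involution $\sigma$ carried by every surface in these strata, satisfying $\sigma^{*}\omega = -\omega$ and hence reversing the vertical direction. Because the vertical flow decomposition of $S$ coincides with the decomposition of the reversed vertical flow, $\sigma$ permutes the set of periodic components among themselves and the set of minimal components among themselves. Each component is therefore either $\sigma$-invariant or forms a pair with a disjoint $\sigma$-image, and the $2g+2$ Weierstrass points lie in the $\sigma$-invariant components or on their boundary saddle connections.

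For the necessity direction I would pass to the quotient $\pi \colon S \to S/\sigma \cong \mathbb{P}^{1}$, which carries a meromorphic quadratic differential $q$ with $\pi^{*}q = \omega^{2}$. The horizontal foliation of $q$ inherits a component decomposition whose components are in bijection with the $\sigma$-orbits of components of $S$: a paired pair descends to one quotient component (covered $2$-to-$1$, unbranched), while a $\sigma$-invariant component descends to a single quotient component branched at its Weierstrass points. Applying Naveh-type sharp bounds to the half-translation stratum on $\mathbb{P}^{1}$ determined by $q$, and then translating back through $p = p_{\mathrm{inv}} + 2 p_{\mathrm{pair}}$ and $m = m_{\mathrm{inv}} + 2 m_{\mathrm{pair}}$, yields the stated inequalities. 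Morally, each component of $S$ consumes at least $2$ or $3$ units of combined Weierstrass-point and genus budget on the quotient (for periodic or minimal respectively); the $-1$ in part (1) reflects that the unique zero of $\omega$ is itself a Weierstrass point, while the $-2$ in part (2) reflects that the two zeros are swapped by $\sigma$ and consume no Weierstrass points.

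For sufficiency I would exhibit explicit polygonal constructions symmetric under the half-turn rotation that descends to $\sigma$. Two building blocks should suffice: a \emph{cylinder block} contributing one invariant cylinder and consuming two Weierstrass points, and a \emph{minimal block} contributing one invariant genus-one minimal component and consuming three. Stacking these blocks vertically in a $\sigma$-symmetric fashion, and adjusting the top and bottom caps to produce the correct cone-point structure for either $\mathcal{H}^{hyp}(2g-2)$ or $\mathcal{H}^{hyp}(g-1,g-1)$, should realize every admissible $(p,m)$. The $g=1$ exception in part (2) is a direct check: $\mathcal{H}^{hyp}(0,0)$ consists of tori with two marked points, whose directional flows are either entirely periodic or entirely minimal, so only $(p,m) \in \{(1,0),(0,1)\}$ are realized while the inequality also permits $(2,0)$.

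The principal obstacle I anticipate is the necessity direction — specifically, relating the Weierstrass-point count per invariant component to the genus contributions of paired components so that an inequality on the quotient translates cleanly into the stated inequality on the total $(p,m)$. A naive count using only invariant components gives a weaker bound and fails to exclude pairs realizable only via all-paired configurations; the sharp result requires careful bookkeeping of genus and Weierstrass-point budget across paired and invariant parts simultaneously, and this is where the most delicate combinatorial work will lie.
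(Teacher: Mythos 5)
Your outline correctly identifies the hyperelliptic involution $\sigma$ as the central tool and your sufficiency sketch (periodic and minimal building blocks assembled $\sigma$-symmetrically, with the $g=1$ exception checked directly) is close in spirit to the paper's construction. But the necessity direction has a genuine gap, and it is exactly the one you flag at the end as your "principal obstacle": you allow components to come in $\sigma$-swapped pairs and propose to bookkeep $p = p_{\mathrm{inv}} + 2p_{\mathrm{pair}}$, $m = m_{\mathrm{inv}} + 2m_{\mathrm{pair}}$, but you never resolve that bookkeeping, and in fact the paper shows the pairing case never occurs. Its Lemma \ref{l:hypInvolFixesInvariantComponents} proves that $\sigma$ fixes \emph{every} invariant component setwise, via a deformation argument: rescale one component horizontally until its area exceeds that of all others, note the involution of the deformed surface (which is still in the hyperelliptic component) must preserve that component since it preserves areas and component types, and transport the involution back. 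Without this, your quotient-based count does not close up, because a swapped pair descends to an unbranched double cover of a quotient component and contributes to the genus and pole budget on $\mathbb{P}^1$ in a way that is not controlled by your "2 or 3 units per component" heuristic.

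A second difficulty is your appeal to "Naveh-type sharp bounds" for the half-translation stratum on $\mathbb{P}^1$: Naveh's theorems, as used in this paper, are for Abelian differentials, and no analogue for quadratic differentials on the sphere is cited or proved; you would have to establish it. The paper avoids this entirely. After fixing each component under $\sigma$, it cuts $S$ along all boundary saddle connections, re-glues the $\sigma$-paired boundary edges within each piece, shows each resulting piece is a translation surface whose quotient by $\sigma$ has genus $0$ and that the pieces are arranged in a tree (an Euler-characteristic computation), and then counts: connectivity of the tree forces at least $m+p-1$ gluing slits, each minimal piece needs at least one additional pair of infinite critical leaves, and each such item contributes $2\pi$ to the total cone angle, giving total cone angle at least $2\pi(3m+2p-2)$; comparison with $2\pi(2g-1)$ or $2\pi(2g)$ yields the bounds. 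You would also need, for sufficiency, the parity analysis showing which gluings produce one cone point versus two equal cone points (so the surface lands in $\mathcal{H}^{hyp}(2g-2)$ rather than $\mathcal{H}^{hyp}(g-1,g-1)$ or vice versa); "adjusting the top and bottom caps" leaves this unverified.
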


Theorem \ref{t:maintheorem} extends work by Naveh (\cite{Naveh}), who obtained tight upper bounds on $m$ and, for each fixed value of $m$, on $m+p$, with the bounds taken over each \emph{stratum}.  An interesting feature of Theorem \ref{t:maintheorem} is that when trying to maximize the number of invariant components of surfaces in the hyperelliptic components, minimal components ``count" one and a half times as much as periodic components do against an upper bound.  Naveh's bounds in the cases of the strata we are considering are as follows:
\begin{enumerate}
\item Over the stratum $\mathcal{H}(2g-2)$, the tight upper bound on $m$ is $g-1$, and for each fixed $0 \leq m \leq g-1$ the tight upper bound on $m+p$ is $g$. 

 \item Over the stratum $\mathcal{H}(g-1,g-1)$, the tight upper bound on $m$ is $g$.   When $m=g$, $p=0$. For each fixed value of $m$ with $0 \leq m \leq g-1$, the tight upper bound on $m+p$ depends on the parity of $g$.  If $g$ is odd, the tight upper bound is $m+p \leq g+1$.  If $g$ is even and $m=g-1$, the tight upper bound on $p$ is $1$.  If $g$ is even and $m < g-1$, the tight upper bound on $m+p$ is $g+1$.    
\end{enumerate}

The other property which, along with hyperellipticity, is used in \cite{KontsevichZorich} to classify the connected components of strata is the parity of the spin structure of a translation surface.  This quantity is defined for translation surfaces in strata whose surfaces have all even order cone points.  The fact that Naveh's bounds for strata of the form $\mathcal{H}(g-1,g-1)$ depend on the parity of $g$ is related to the existence of the parity of the spin structure.  An analysis of the interplay between spin structures (and the connected components of moduli space they characterize) and numbers of invariant components of translation surfaces will be the subject of a forthcoming work.

\subsection*{Acknowledgments}
I wish to thank my advisor, John Smillie, for his guidance and support in the development of this paper.  

\subsection*{Background}

A \textit{marked translation surface} is a triple $f:(S,\Sigma) \rightarrow (M,\Sigma ')$ consisting of (i) a topological surface $S$ with a finite set of marked points $\Sigma$, (ii) a 2-dimensional real compact manifold $M$ with a set of marked points $\Sigma '$ such that on $M - \Sigma '$ the transition maps between charts are translations, and (iii) a homeomorphism $f: S \rightarrow M$. A \textit{marked half-translation surface} is defined similarly, but the transitions maps between charts in (iii) are only required to be of the form $z \mapsto \pm z + c$.  Equivalently, we can define a marked translation surface to be a compact, connected Riemann surface $R$ together with an Abelian differential $\omega$ defined on $R$; the flat metric is locally defined by integrating $\omega$ along paths.  A marked half-translation surface corresponds to a meromorphic quadratic differential whose poles are simple.  The survey articles \cite{MasurTabachnikov} and \cite{ZorichSurvey} provide an introduction to the theory of flat surfaces.  

The space of marked translation surfaces has a natural stratification based on the number and order of cone points of the surfaces.  Let $k_1,...,k_n$ be natural numbers such that $\sum_{i=1}^n k_i = 2g-2$ for some $g \in \mathbb{N}$, and let $S$ be a topological surface of genus $g$ with $n$ marked points.  Define an equivalence relation $\sim$ as follows: two translation surfaces $f:(S,\Sigma) \rightarrow (M_1,\Sigma_1)$ and $g:(S,\Sigma) \rightarrow (M_2,\Sigma_2)$ are in the same equivalence class if there exists a translation equivalence $h:M_1 \rightarrow M_2$ such that $h \circ f$ is isotopic to $g$.  The stratum $(k_1,...,k_n)$ of the \textit{ space of marked surfaces}, $\tilde{\mathcal{H}}(n_1,...,n_1)$ consists of the set of surfaces translation surfaces $f:(S,\Sigma) \rightarrow (M,\Sigma ')$ whose cone points have orders $n_1,...,n_k$, modulo the equivalence relation $\sim$.   

The \textit{mapping class group }$MCG(S,\Sigma)$ of a marked topological surface $(S,\Sigma)$ is the group of homeomorphisms of $(S,\Sigma)$ (homeomorphisms of $S$ that preserve $\Sigma$ as a set) up to isotopy.  $MCG(S,\Sigma)$ acts on $\tilde{\mathcal{H}}(S,\Sigma)$ via precomposition -- i.e. if $[\alpha] $ is in $MCG(S,\Sigma)$ and $f:(S,\Sigma) \rightarrow (M,\Sigma')$ is a marked translation surface, the image of this surface under $[\alpha]$ is $f \circ \alpha:(S,\Sigma) \rightarrow (M,\Sigma ')$.  The stratum  $\mathcal{H}(k_1,...,k_n)$ of the \textit{moduli space of translation surfaces} consists of $\tilde{\mathcal{H}}(k_1,...,k_2)$ modulo the action of $MCG(S,\Sigma)$.  

Given a path $\gamma$ on a translation surface, define the holonomy coordinates of $\gamma$ to be $(\int_{\gamma} dx, \int_{\gamma} dy)$.  Holonomy coordinates determine a map from $\mathcal{H}$ to $H^1(S,\Sigma; \mathbb{R}^2)$.  We may think of an element of $H^1(S,\Sigma; \mathbb{R}^2)$ as assigning a number in $\mathbb{R}^2$ to each homotopy (rel $\Sigma$) class of paths between points in $\Sigma$.  Given a translation surface $f:(S,\Sigma) \rightarrow (M,\Sigma ')$, define the corresponding element $H^1(S,\Sigma; \mathbb{R}^2)$ to be the element that assigns to each homotopy class of paths in $S$ the holonomy coordinates of the path $f\circ \gamma$ in $M$.  Thus, holonomy defines a map from $\tilde{H}(S,\Sigma) \rightarrow H^1(S,\Sigma; \mathbb{R}^2) \simeq \mathbb{R}^{2(2g+|\Sigma| - 1)}$.  Locally this map is injective -- a slightly perturbed surface has the same combinatorial triangulation, but the sidelengths of the triangles are slightly different.  This map defines a (local) topology and coordinates on $\tilde{\mathcal{H}}(S,\Sigma)$.  The stratum  $\mathcal{H}(S,\Sigma)$ of moduli space can then be equipped with the quotient topology.  The stratum $\mathcal{H}(S,\Sigma)$ is an orbifold of real dimension $2(2g+|\Sigma| - 1)$.

For every $g \in \mathbb{N}$, the strata $\mathcal{H}(2g-2)$ and $\mathcal{H}(g-1,g-1)$ each have a connected component which is called the hyperelliptic component of that stratum and denoted $\mathcal{H}^{hyp}(2g-2)$ and $\mathcal{H}^{hyp}(g-1,g-1)$, respectively.  Let $\mathcal{Q}(k_1,...,k_n)$ denote the moduli space of pairs $(C,\omega)$ where $C$ is a smooth compact complex curve and $\omega$ is a meromorphic quadratic differential on $C$ with zeros of orders $l_1,...,l_n$ such that $\omega$ is not the square of an Abelian differential.  Each pair $(C,\omega) \in \mathcal{Q}(l_1,...,l_n)$ is associated in a canonical way with another connected Riemann surface $C'$ and Abelian differential $\omega '$ on $C$.  Specifically, $C'$ is the unique double covering of $C$ (possibly ramified at the singularities of $\omega$) such that the pullback of $\omega$ is the square of an Abelian differential;  $\omega ' $ is this Abelian differential.  We are interested in the following two special cases of this map:
\begin{align}
\mathcal{Q}(-1^{2g^{\prime}+1},2g^{\prime}-3) &\rightarrow \mathcal{H}(2g^{\prime}-2) \nonumber \\
\mathcal{Q}(-1^{2g^{\prime}+2},2g^{\prime}-2) &\rightarrow \mathcal{H}(g^{\prime}-1,g^{\prime}-1).  \nonumber
\end{align}
(The exponential notation $\mathcal{Q}(-1^{2g^{\prime}+1},2g^{\prime}-3)$ refers to quadratic differentials with $2g^{\prime}+1$ simple poles and one zero of order $2g^{\prime}-3$.)  In these two cases, the map is an injective immersion, the dimension of the domain equals the dimension of the range, and the domain is a nonempty, connected stratum whose elements are topological spheres.  The hyperelliptic component $\mathcal{H}^{hyp}(2g-2)$ consists of the image of the map $\mathcal{Q}(-1^{2g+1},2g-3) \rightarrow \mathcal{H}(2g-2)$.  The hyperelliptic component $\mathcal{H}^{hyp}(g-1,g-1)$ consists of the image of the map $\mathcal{Q}(-1^{2g+2},2g-2) \rightarrow \mathcal{H}(g-1,g-1)$.  Kontsevich and Zorich classified the connected components of the moduli space of translation surfaces in \cite{KontsevichZorich}.  

Each surface in any hyperelliptic component admits a unique hyperelliptic involution - an involution which is an isometry (with respect to the flat metric) and fixes precisely $2g+2$ points, where $g$ is the genus of the surface.  See \cite{Miranda} for an exposition of the theory of hyperelliptic Riemann surfaces, including a proof of the uniqueness of the hyperelliptic involution (page 204).  We will refer to those points in a hyperelliptic surface which are fixed by the hyperelliptic involution and are not cone points as \textit{Weierstrass points}; the Weierstrass points correspond to cone points of angle $\pi$ (or, equivalently, simple poles of the quadratic differential) in the half-translation surface which is the quotient by the hyperelliptic involution. 

A \textit{saddle connection} in a translation surface is a geodesic path in the surface whose end points are (not necessarily distinct) cone points of the surface.  We will call a saddle connection whose midpoint is a Weierstrass point a \textit{Weierstrass edge}.  The boundary of any invariant component of a translation surface is a finite union of saddle connections in the direction of the flow.  

The Poincar\'{e}-Bendixson Theorem implies that every minimal component of a translation surface has genus at least one. A classical result states that a continuous flow on a closed, orientable surface (not necessarily a translation surface) of genus $g$ has at most $g$ distinct sets which are orbit closures of non-periodic, recurrent points.  Moreover, any such surface admits a continuous flow which achieves this bound \cite{Markley}.  Naveh discovered the following two theorems.
 \begin{theorem} (\cite{Naveh}) \label{NavehM}
Let $\mathcal{H} = \mathcal{H}(a_1,...,a_j)$ be a stratum in the moduli space of translation surfaces of genus $g$.
\begin{enumerate} 
\item If $a_i \leq g-1$, $i = 1,2,...j,$ then for every flat surface in $\mathcal{H}$ an upper bound on the number of minimal components is $g$, and this bound is tight. 
\item Otherwise, for every flat surface in $\mathcal{H}$ an upper bound on the number of minimal components is $g-1$, and this bound is tight. \\ \end{enumerate}
\end{theorem}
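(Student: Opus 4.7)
The plan is to separate the statement into an upper-bound part and a construction of extremal surfaces, treating the two cases largely in parallel.

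\textbf{Upper bound.} Case~(1) is immediate from the classical bound quoted just before the theorem: any continuous flow on a closed orientable genus-$g$ surface has at most $g$ minimal sets, so $m\leq g$. The content of case~(2) is the improved bound $m\leq g-1$ under the extra hypothesis that some $a_i\geq g$, and I would prove it by contradiction. Suppose $m=g$. By Poincar\'e--Bendixson each minimal component $M_k$ has genus $g_k\geq 1$, and the Euler-characteristic identity
\[
2-2g \;=\; \sum_{k=1}^{m}\chi(M_k) \;=\; \sum_{k=1}^{m}(2-2g_k-n_k)
\]
(periodic cylinders contribute $0$), combined with the elementary fact that cutting a closed genus-$g$ surface along disjoint simple closed curves can only decrease $\sum g_k$, so $\sum g_k\leq g$, forces $g_k=1$ for every $k$ and $\sum n_k = 2g-2$. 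Thus every minimal component must be a torus with boundary, and the dual graph of the decomposition is a tree. I would then apply Gauss--Bonnet on each torus-minimal-component to relate the sum of its interior angles at boundary corners to $n_k$ and the orders of any interior cone points; summing over $k$ and accounting for the cone angle $2\pi(a_p+1)$ at each singularity $p$, partitioned among minimal components and cylinders, I expect to extract an inequality of the form $a_p+1\leq g$ for every cone point $p$, contradicting the hypothesis $a_i\geq g$.

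\textbf{Tightness.} For the constructions I would glue flat tori along vertical slits. In case~(1), take $g$ tori, each carrying an irrational-slope vertical flow (so the flow is minimal on each factor), introduce vertical slits, and identify pairs of slit-sides across tori in such a way that wherever $a+1$ slit-endpoints are identified to a single point a cone point of order $a$ is created. Since $a_i\leq g-1$ means no more than $g$ endpoints need to coincide at any singularity and we have $g$ tori to draw endpoints from, any such stratum is realizable, and the resulting surface has exactly $g$ minimal components (the $g$ tori). In case~(2) I would carry out an analogous construction with $g-1$ tori together with additional slit combinatorics needed to accommodate the singularity of order $\geq g$, yielding a surface with $g-1$ minimal components in the prescribed stratum.

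\textbf{Main obstacle.} The technical heart is the Gauss--Bonnet angle analysis for case~(2). Two subtleties demand care: the angular contributions of periodic cylinders that may also touch the high-order cone point, and the possibility that the boundary of a single torus-minimal-component visits the same cone point several times, so that no naive ``$2\pi$ per component'' upper bound literally applies. Bookkeeping these contributions to extract a clean bound on $a_p$ is where I expect most of the effort to go. A secondary hurdle will be verifying that the slit-and-glue recipe really does realize every stratum covered by the hypothesis of~(1), which requires a combinatorial check that slit endpoints can be distributed across $g$ tori compatibly with any prescribed partition of $2g-2$ into parts bounded by $g-1$.
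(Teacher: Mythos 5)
This statement is quoted from Naveh's paper and is not proved in the present paper at all, so there is no internal argument to compare yours against; what follows is an assessment of your sketch on its own terms. The decisive step of case (2) is missing. Everything in your upper-bound paragraph up to ``every minimal component is a torus with boundary'' is plausible (though already not airtight: the boundaries of invariant components are unions of vertical saddle connections that can share cone points, so they are not disjoint simple closed curves, and both the identity $2-2g=\sum(2-2g_k-n_k)$ and the inequality $\sum g_k\leq g$ require the corner and cone-angle corrections you postpone; likewise the tree structure of the dual graph is asserted, not derived). But the actual content of case (2) --- that $m=g$ forces $a_p\leq g-1$ for every singularity --- is exactly the step you label ``I expect to extract an inequality'' and then defer to the ``main obstacle'' paragraph. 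The Gauss--Bonnet bookkeeping there is the theorem: at a zero of order $a_p$ the cone angle $2\pi(a_p+1)$ is partitioned among corners of several components, each corner contributing an interior angle of the form $(2l+1)\pi$, and these $l$'s feed back into the genus count of each torus component and of the cylinders. Without carrying out that accounting you have not shown the contradiction, and it is not obvious that the clean pointwise bound $a_p+1\leq g$ is what actually falls out rather than some aggregate inequality. As written, the proposal is a plan for a proof of case (2), not a proof.

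The tightness half has the same character. For case (1) you need to verify that for \emph{every} partition $2g-2=\sum a_i$ with all $a_i\leq g-1$ the slit endpoints can be distributed over $g$ minimal tori so that exactly $a_i+1$ endpoints coalesce at the $i$-th singularity while the glued surface stays connected and each torus remains a separate minimal component (slits must avoid creating vertical saddle connections that merge components); you acknowledge this check but do not do it. For case (2) the construction is not specified at all (``additional slit combinatorics''), and producing a single zero of order $a_i\geq g$ while retaining $g-1$ disjoint minimal tori is precisely the nontrivial part. So both halves of the theorem rest on steps that are named but not executed.
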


\begin{theorem} (\cite{Naveh}) \label{NavehMPlusP}
Let $\mathcal{H} = \mathcal{H}(a_1,...,a_j)$ be a stratum in the moduli space of translation surfaces of genus $g \geq 2$.  Denote $B = \{i: a_i \textrm{ is odd}\}$.  Fix $0 \leq M \leq g-1$ and denote $m = \max\{0, M-[g-1 - |B|/2]\}$.  Let $M$ denote the number of minimal components, and $P$  denote the number of periodic components of a translation surface. Then for every flat surface in $\mathcal{H}$ \[M +P \leq g-1 + j -m,\] and this bound is tight (meaning there exists a surface in $\mathcal{H}$ with $M$ minimal components and $g-1+j-m - M$ periodic components).  
If $M = g$, then $P = 0$ and $M+P = g$. 
\end{theorem}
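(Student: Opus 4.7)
The plan is to prove Naveh's upper bound through an Euler-characteristic decomposition of the flow, combined with a parity count at odd-order cone points, and then to establish tightness by a direct assembly construction.

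First I would derive the following topological identity. For a translation surface $S$ with vertical flow decomposition into components $C_1,\ldots,C_{M+P}$, let $\Gamma$ be the vertical saddle-connection graph with $j$ vertices and $E$ edges, let $g_M=\sum_i g(C_i)$ (concentrated on minimal components since cylinders have genus $0$), let $B=\sum_i b(C_i)$ be the total number of boundary circles (each component viewed as a compact surface with boundary), and let $r$ count the vertical half-rays at cone points that do not terminate at another cone point (equivalently, those lying in the interior of some minimal component). Applying the additivity of compactly-supported Euler characteristic to the decomposition $S=\Gamma\sqcup\bigsqcup_i\mathrm{int}(C_i)$, together with the angle-balance identity $2E+r=4g-4+2j$, one obtains
\[
M+P \;=\; g_M + \tfrac{1}{2}B - \tfrac{1}{4}r.
\]

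Next, for the baseline bound $M+P\leq g-1+j$, I would combine $g_M\leq g$ (minimal components are disjoint positive-genus subsurfaces of $S$) with a Gauss--Bonnet-type estimate $r\geq 2g_M$ (each minimal component of genus $g_i\geq 1$ requires at least $2g_i$ interior rays to support its dense leaf structure), along with a matching bound on $B$ derived from the chain structure of the boundary. For the refinement by $|B|$, I would analyze the local angular structure at each odd-order cone point: the $2(a_i+1)$ vertical rays at such a cone partition into ``up'' and ``down'' rays whose alternation around the cone imposes a mod-$2$ constraint on how these rays distribute among incident components. Consequently, each odd cone on the boundary of a minimal component forces at least one extra pair of interior rays beyond what minimality alone requires, and summing these forced obstructions across the $|B|$ odd cones yields $r\geq 2g_M+2m$ with $m=\max\{0,M-(g-1-|B|/2)\}$. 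Substituting into the identity gives Naveh's refined bound $M+P\leq g-1+j-m$; the special case $M=g$ then follows because all of the genus is absorbed by the minimal components, leaving $P=0$.

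For tightness, I would construct realizing surfaces by direct assembly. For each target $(M,P)$ saturating the bound, take $M$ minimal translation surfaces of prescribed small genera $g_1,\ldots,g_M$ together with $P$ flat cylinders, and identify their boundary circles in a pattern chosen so that the resulting closed surface is connected, lies in the stratum $\mathcal{H}(a_1,\ldots,a_j)$, and has precisely $M$ minimal and $P$ periodic components. The genera $g_i$ are chosen to satisfy $\sum g_i + h = g$ for an appropriate number $h$ of additional handle-gluings, and the distribution of cone-point orders among the pieces is dictated by $(a_1,\ldots,a_j)$.

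The hardest step is the parity refinement. Making precise the mod-$2$ ray-distribution constraint at odd-order cones, and showing that it accumulates to exactly the quantity $|B|/2$ rather than a coarser figure, requires careful local accounting of how the up/down ray alternation constrains the angular sectors incident to each component as one varies the distribution of odd cones among minimal-component boundaries. Moreover, ensuring that the assembly construction achieves the bound at equality rather than strict inequality demands that the gluing pattern be chosen to match the odd-cone configuration, a combinatorial problem requiring extra case analysis.
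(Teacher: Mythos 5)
First, a point of orientation: the paper does not prove Theorem \ref{NavehMPlusP} at all --- it is quoted verbatim from Naveh's paper \cite{Naveh} as background, so there is no internal proof to compare your argument against. Your proposal therefore has to stand on its own, and as written it does not. The central identity $M+P = g_M + \tfrac{1}{2}B - \tfrac{1}{4}r$ is false. Take a generic surface in $\mathcal{H}(2)$ whose vertical foliation is minimal with no vertical saddle connections: then $M+P=1$, $g_M=2$, $B=0$, $E=0$, and the single cone point of angle $6\pi$ contributes $r=6$ free rays, so your formula yields $2+0-\tfrac{6}{4}=\tfrac{1}{2}\neq 1$. The error is in the Euler-characteristic bookkeeping: you compute $\chi_c(\mathrm{int}(C_i))$ as $2-2g_i-b_i$, which ignores the cone points (and any interior vertical saddle connections) lying in the \emph{interior} of a minimal component; these must be subtracted off, and once they are, the clean cancellation that produced your identity no longer occurs. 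Every subsequent inequality is substituted into this identity, so the upper bound does not follow as written. (Your angle-balance identity $2E+r=4g-4+2j$ is fine; it is essentially the ray count underlying the paper's Lemma \ref{l:totalConeAngle}.)

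Beyond that, the two steps that carry the actual content of the theorem are asserted rather than proved. The refinement $r\geq 2g_M+2m$ with $m=\max\{0,M-(g-1-|B|/2)\}$ --- the part that makes the bound depend on the number of odd-order zeros --- is described as following from a ``mod-$2$ constraint'' at odd cones, but you explicitly defer the local accounting that would establish it, and you give no argument that the obstruction accumulates to exactly $|B|/2$. Likewise the tightness half of the theorem is reduced to ``identify boundary circles in a pattern chosen so that'' the surface lands in the right stratum with the right component counts; for an arbitrary stratum $\mathcal{H}(a_1,\ldots,a_j)$ this matching of cone-point orders to an assembly of minimal pieces and cylinders is precisely where the work lies (compare the care the paper takes with its $P_n$ and $M_n$ building blocks and Lemma \ref{l:technical} just to handle the two hyperelliptic strata). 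So the architecture (Euler characteristic plus ray counting plus explicit construction) is plausible and in the right spirit, but the identity must be corrected and the two hard steps actually carried out before this constitutes a proof.
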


\section{Invariant component diagrams and upper bounds}

Throughout, we will assume the flow on a surface to be in the vertical direction.

\begin{lemma} \label{l:hypInvolFixesInvariantComponents} Let $S$ be an element of  $\mathcal{H}^{hyp}(2g-2)$ or $\mathcal{H}^{hyp}(g-1,g-1)$ for any $g \in \mathbb{N}$, and denote by $\gamma$ the hyperelliptic involution of $S$. Then $\gamma(C)=C$ for every invariant component $C$ of $S$.  
\end{lemma}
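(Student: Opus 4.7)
Plan: I will work via the quotient $\pi\colon S\to P:=S/\gamma$, where $P$ is the associated half--translation surface on the sphere in $\mathcal{Q}(-1^{2g+1},2g-3)$ (Case~1) or $\mathcal{Q}(-1^{2g+2},2g-2)$ (Case~2). Since $\gamma^*\omega=-\omega$, the involution sends each vertical leaf on $S$ to a vertical leaf with reversed direction, and so permutes the invariant components of the vertical flow on $S$. Suppose for contradiction that $\gamma(C)=C'$ for some invariant component with $C\ne C'$. Then $\{C,C'\}$ projects to a single invariant component $D$ of the vertical foliation on $P$; the restriction $\pi|_{\overline C}\colon \overline C\to\overline D$ is a homeomorphism (it is a continuous bijection between compact Hausdorff spaces, with $\pi^{-1}$ of a branch point in $\overline D$ a single point of $\overline C\cap\overline{C'}$); and the double cover $\pi$ restricted to $\mathrm{int}(D)$ is the trivial cover.

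The branch points of $\pi$ project to the $2g+1$ simple poles together with the odd--order zero on $P$ in Case~1, and to the $2g+2$ simple poles on $P$ in Case~2. Any $\gamma$--fixed point on $S$ that does not lie in the interior of an invariant component must be the midpoint of a vertical saddle connection $e$ satisfying $\gamma(e)=e$, that is, a Weierstrass edge.

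For Case~1 I would use the following observation: $\gamma$ acts on the unique cone point of $S$ as rotation by $(2g-1)\pi$, which is the antipodal map on the $4g-2$ vertical rays there. Hence no ray is $\gamma$--fixed, every saddle connection pairs $\gamma$--antipodal rays, and there are at most $(4g-2)/2=2g-1$ vertical saddle connections on $S$, and therefore at most $2g-1$ Weierstrass edges. The $2g+1$ Weierstrass points of $S$ must be split between interiors of $\gamma$--fixed invariant components (recall no Weierstrass point can lie in the interior of a swapped component) and midpoints of Weierstrass edges. A counting argument combining this with the fact that the unique cone point of $S$ lies in the closure of every invariant component and that the sectors at the cone point are permuted antipodally by $\gamma$ produces a contradiction with the existence of any swapped pair.

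The main obstacle is Case~2, where the zero on $P$ of even order $2g-2$ is unramified. If $\overline D$ contains a simple pole, the Case~1 argument adapts directly. Otherwise the only singularity in $\overline D$ is the zero. If $D$ is a minimal component, then $\overline C$ is homeomorphic to $\overline D$ via $\pi|_{\overline C}$, hence is a planar surface (a subsurface of $S^2$ has genus $0$); but by Poincar\'e--Bendixson applied to planar regions, the vertical flow on $\overline C$ cannot have a dense recurrent non--periodic orbit, contradicting that $C$ is a minimal component of the flow on $S$. If $D$ is a cylinder, I would carry out a combinatorial analysis parallel to Case~1, using the $2g$ vertical rays at the zero on $P$ and the $2g$ vertical rays at each of the two $\gamma$--swapped cone points of $S$, combined with Weierstrass--edge counting against the $2g+2$ Weierstrass points, to produce the contradiction.
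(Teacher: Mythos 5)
Your strategy (pass to the quotient sphere and count) is entirely different from the paper's, which never leaves $S$: the paper stretches $C$ horizontally by $\bigl(\begin{smallmatrix} t&0\\ 0&1\end{smallmatrix}\bigr)$ until it is the unique invariant component of largest area, notes that the deformation stays in the hyperelliptic component so the deformed surface's hyperelliptic involution must preserve that largest component, and transports the involution back, invoking uniqueness of the hyperelliptic involution. One part of your argument is complete and is a genuine alternative: if $C$ is minimal and $\gamma(C)\neq C$, then $\pi$ embeds the interior of $C$ into the sphere, so $C$ is planar and cannot carry a non-periodic recurrent leaf. Note this works verbatim in Case 1 as well, not only in Case 2.

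The genuine gap is the periodic (cylinder) case, in both strata: there you only promise ``a counting argument'' and ``a combinatorial analysis parallel to Case 1,'' and the counts you do set up will not close. Concretely, by Gauss--Bonnet the core curve of any maximal cylinder $D$ of the quotient foliation bounds on each side a disk whose singularity orders sum to $-2$; hence one side contains exactly $2$ simple poles and the other contains the remaining singularities, so \emph{each side contains an even number of branch points in every case} ($2$ and $2g$). The monodromy of the double cover is therefore trivial over \emph{every} maximal cylinder of $P$, so no count of enclosed branch points, rays at the cone point, or Weierstrass edges can distinguish a swapped pair from a fixed component --- indeed every cylinder of $P$ really does lift to two $\gamma$-swapped cylinders of $S$. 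The actual reason these never form a swapped pair of periodic components is different: the boundary circle of $D$ on the two-pole side can only pass through simple poles, hence is a slit joining two poles, which lifts to a \emph{regular} closed leaf of $S$; so the two lifted cylinders are not maximal and sit inside a single $\gamma$-invariant periodic component. Your sketch contains no trace of this step, and the one inequality you do prove in Case 1 ($\le 2g-1$ Weierstrass edges versus $2g+1$ Weierstrass points) only shows that at least one component is fixed, not that all are.
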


\begin{proof}  If $g=1$, the translation surface $S$ has no saddle connections and so consists of a single invariant component.  If $S$ consist of a single invariant component the conclusion is immediate, regardless of $g$.   

So assume $g \geq 2$ and assume $S$ has more than one invariant component (in the vertical direction).   Let $A$ be the maximum of the areas (with respect to the flat metric) of the invariant components of $S$.  For $t > 0$, let $M_t \in GL_2(\mathbb{R})$ be the matrix $\bigl(\begin{smallmatrix}
t&0\\ 0&1
\end{smallmatrix} \bigr)$.  For $t>0$, denote by $S_t$ the surface made by gluing together $S-C$ and $M_t(C)$ in the same way as in the original surface $S$. (The boundary of $C$ in $S$ consists of vertical saddle connections, and the matrices $M_t$ preserve the direction and lengths of vertical saddle connections, so we can still glue along the vertical saddle connections to form $S_t$.)   

Pick $p>1$ such that the area of $M_p(C)$ is greater than $A$. The surfaces $S_p$ and $S$ are in the same connected component of moduli space, the hyperelliptic component, since $\{S_t : 1 \leq t \leq p \}$ forms a path in moduli space from $S$ to $S_p$.  Let $\gamma_p$ be the hyperelliptic involution of $S_p$.  Since $\gamma_p$ is an isometry which maps vertical geodesics to vertical geodesics, $\gamma_p$ maps minimal components to minimal components and periodic components to periodic components (of the same area). Since $area(M_p(C)) > A$, we must have $\gamma_p(M_p(C))=M_p(C)$.  Hence $\gamma_p(S_p - M_p(C)) = S_p-M_p(C)$.  

Now we will use $\gamma_p $ to construct a hyperelliptic involution $\gamma_1$ on $S$.  On $S-C$, let $\gamma_1 = \gamma _p$.  On $C$, let $\gamma_1 =  M_p^{-1} \circ \gamma_p \circ  M_p$.  Since $\gamma_p$ is an involution and has $2g-2$ fixed points, so does $\gamma_1$.  Thus $\gamma_1$ is a hyperelliptic involution on $S$.  Since hyperelliptic involutions are unique for surfaces of genus $ g \geq 2$, $\gamma = \gamma_1$.  Thus $\gamma(C) = \gamma_1(C) = C$.  
\end{proof}

For any invariant component $C$, we will refer to a vertical saddle connection (recall we are assuming the flow to be in the vertical direction) in the boundary of $C$ as a \textit{boundary edge} of $C$.  

\begin{lemma} \label{l:PairedEdges}
Let $C$ and $D$ be invariant components of a surface $S$ in $\mathcal{H}^{hyp}(2g-2)$ or $\mathcal{H}^{hyp}(g-1,g-1)$. Then the number of boundary edges of $C$ that are glued to $D$ is even, and the hyperelliptic involution interchanges pairs of these boundary edges.
\end{lemma}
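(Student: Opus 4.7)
The plan is to combine Lemma \ref{l:hypInvolFixesInvariantComponents} with the local structure of the hyperelliptic involution $\gamma$ at its fixed points. First I note that $\gamma$ is an affine automorphism with derivative $-I$, so it maps vertical saddle connections to vertical saddle connections (reversing their orientations) and carries each invariant component to an invariant component. By Lemma \ref{l:hypInvolFixesInvariantComponents}, $\gamma(C)=C$ and $\gamma(D)=D$. Hence for any boundary edge $e$ of $C$ glued to $D$, the image $\gamma(e)$ is a vertical saddle connection sitting on the boundary between $\gamma(C)=C$ and $\gamma(D)=D$, so $\gamma(e)$ is again a boundary edge of $C$ glued to $D$. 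Thus $\gamma$ acts as an involution on the finite set $E$ of boundary edges of $C$ glued to $D$, and it will suffice to show this involution is free.

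Next I rule out fixed edges. Suppose $\gamma(e)=e$. Since the fixed-point set of $\gamma$ is finite (the $2g+2$ Weierstrass/cone-point fixed points), $\gamma$ cannot fix $e$ pointwise, so it must act on $e$ as an orientation-reversing involution whose unique fixed point is the midpoint $w$ of $e$. Because $w$ lies in the interior of a saddle connection, it is a regular point of the flat metric, and hence $w$ is a Weierstrass point. In a neighborhood of $w$, the isometric involution $\gamma$ takes the form $z\mapsto -z$, i.e.\ rotation by $\pi$ about $w$, and such a rotation interchanges the two local sides of the vertical segment $e$.

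The contradiction comes from applying this to points near $w$. Since $e$ is glued between the distinct components $C$ and $D$, one side of $e$ lies locally in the interior of $C$ and the other in the interior of $D$, and these interiors are disjoint. Choose $q$ in the interior of $C$ very close to $w$ on the $C$-side of $e$; then $\gamma(q)$ is close to $w$ on the opposite side of $e$, hence in the interior of $D$. This contradicts $\gamma(C)=C$. Therefore $\gamma$ has no fixed edges in $E$, it decomposes $E$ into $2$-cycles, so $|E|$ is even and $\gamma$ exhibits the claimed pairing. The only subtle step is the local analysis at $w$: the key point is that the midpoint of a boundary edge between two distinct invariant components cannot be a Weierstrass point, because the rotation by $\pi$ there would necessarily swap the two components.
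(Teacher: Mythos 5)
Your proof is correct and follows essentially the same route as the paper's: use Lemma \ref{l:hypInvolFixesInvariantComponents} to see that $\gamma$ permutes the boundary edges between $C$ and $D$, then rule out a fixed edge by observing that its midpoint would be a Weierstrass point about which $\gamma$ rotates by $\pi$, swapping the two sides of the edge and contradicting the invariance of the components (the paper phrases the contradiction as a full disk about the midpoint lying in $C$, but it is the same local argument).
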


\begin{proof}
Assume there exists a boundary edge $e_1$ of $C$ which is glued to $D$.  Let $p$ be the midpoint of $e_1$ and let $\gamma$ be the hyperelliptic involution of $S$.  Suppose $\gamma(e_1) = e_1$.  Then since $\gamma$ is an isometry, $\gamma$ restricted to $e_1$ is either the identity or rotation about the midpoint of $e_1$.  Since $\gamma$ has only finitely many fixed points in $S$, $\gamma$ cannot be the identity.  Hence $p$ is a Weierstrass point,  and in a small neighborhood of $p$ in $S$,  $\gamma$ acts as a rotation around $p$ by $\pi$ radians.   Since $\gamma(C)=C$ by Lemma \ref{l:hypInvolFixesInvariantComponents}, this means a small disk about $p$ is contained in $C$, contradicting the fact that $p \in e_1$ is in the boundary of $C$.  

Therefore $\gamma(e_1) \not = e_1$.  Let $e_2 = \gamma(e_1)$.  Since both $C$ and $D$ are fixed by $\gamma$, $e_2$ is also a boundary edge of $C$ that is glued to $D$.  Since $\gamma$ is an involution $\gamma(e_2) = e_1$. 
\end{proof}

We will now define the dissection $\hat{S}$ of a hyperelliptic surface $S$ in $\mathcal{H}^{hyp}(2g-2)$ or $\mathcal{H}^{hyp}(g-1,g-1)$.  The first step in constructing $\hat{S}$ is to cut along all boundary edges of of all the invariant components of $S$.  This results in a number of ``pieces" (surfaces with boundary) --  one for each invariant component of $S$.  Each boundary edge of an invariant piece $C$ is paired with another boundary edge of $C$ by the hyperelliptic involution by Lemma \ref{l:PairedEdges}.  The second  step in constructing $\hat{S}$ is to glue the paired edges together via translations.  This yields a union of closed surfaces, say $S_1,...,S_k$, where $k$ is the number of invariant components of $S$.  We will call each surface $S_i$ a \emph{piece} of the dissection $\hat{S}$. (Lemma \ref{l:piecesAreTransSurfs} will show that each $S_i$ is a translation surface, as opposed to a half-translation surface). Define the set of \emph{augmented cone points} to be the set consisting of all preimages in $S_1 \cup ... \cup S_k$ of the cone points of $S$.  (Every cone point in any surface $S_i$ is the preimage of a cone point of $S$, but not every preimage in a $S_i$ of a cone points of $S$ is a cone point (i.e. has cone angle $>2\pi$) of $S_i$.)  The \textbf{dissection} $\hat{S}$ of $S$ is of the collection of pieces $\{S_1,...,S_k\}$ together with the set of augmented cone points.  

For example, suppose $S$ is a surface in $\mathcal{H}(1,1)$ which consists of two minimal tori glued along a vertical slit.   To construct the dissection $\hat{S}$, the first step would cut along the slit.  This yields two tori, each of which has a slit cut in it.  Second, since the hyperelliptic involution interchanges the sides of each slit, we would ``heal" (glue together the sides of) the slit in each torus.  This yields two minimal tori $S_1$ and $S_2$ (without slits); these are the the two pieces of $\hat{S}$.  The set of augmented cone points consists of four points: the points in each torus that were the top and bottom of the slit.  The dissection $\hat{S}$ consists of the two pieces $S_1$ and $S_2$, along with the set of the four augmented cone points.  

Define an \textit{augmented saddle connection} of $\hat{S}$ to be a geodesic path in one of the $S_i$ whose endpoints are both augmented cone points.  If $C$ is an invariant component of $S$, each pair of boundary edges of $C$ becomes a vertical augmented saddle connection in the piece $S_C$ in $\hat{S}$ corresponding to $C$.   Furthermore, the restriction of the hyperelliptic involution to $S_C$ defines an isometric involution on $S_C$ for which the midpoint of this augmented saddle connection is a fixed point.

\begin{lemma} \label{l:piecesAreTransSurfs}
Each piece $S_i$ of the dissection $\hat{S}$ is a translation surface.
\end{lemma}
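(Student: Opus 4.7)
The plan is to verify that the translation atlas on the interior of $C$ (inherited from $S$ as an open subset of a translation surface) extends continuously across each glued seam to give a translation atlas on all of $S_C$. The interior of $C$ is an open subset of the translation surface $S$, so it inherits a translation atlas. Its boundary consists of vertical saddle connections which, by Lemma \ref{l:PairedEdges}, come in pairs $e_1, e_2 = \gamma(e_1)$; these pairs have equal length since $\gamma$ is an isometry. The construction of $\hat{S}$ glues $e_1$ to $e_2$ via a translation, and the task is to check that each resulting seam is covered by a translation chart.

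Fix such a pair $(e_1,e_2)$ and a point $p$ in the interior of $e_1$. Choose a small rectangular neighborhood $V_1$ of $p$ in $S$ together with a translation chart $\phi_1 : V_1 \to \mathbb{C}$ mapping $e_1 \cap V_1$ onto a vertical segment, arranged so that $\phi_1(V_1 \cap C)$ is the right half of $\phi_1(V_1)$. Let $q$ denote the point of $e_2$ identified with $p$ by the gluing, and choose a similar neighborhood $V_2$ of $q$ with translation chart $\phi_2 : V_2 \to \mathbb{C}$ such that $\phi_2(V_2 \cap C)$ is the left half of $\phi_2(V_2)$. After the identification, $(V_1 \cap C) \cup (V_2 \cap C)$ becomes a topological full rectangle. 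Define $\Phi$ on this neighborhood by setting $\Phi = \phi_1$ on $V_1 \cap C$ and $\Phi = \phi_2 + c$ on $V_2 \cap C$, where $c \in \mathbb{C}$ is the unique constant making the two formulas agree on the identified edge. Such a $c$ exists precisely because the gluing matches the heights of two vertical segments of equal length. On each side $\Phi$ differs from a translation chart of $S$ by a translation, so $\Phi$ is itself a translation chart; its transitions with the other $S$-inherited translation charts of $C$ are translations as well. Applying this construction at every seam yields a translation atlas on $S_C$.

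The only real obstacle is that the gluing really must be a height-matching translation (top-to-top and bottom-to-bottom), as specified in the definition of $\hat{S}$. The alternative identification on the edges, matching top-to-bottom, would amount to extending $\gamma$ across the seam, and since $\gamma$ acts locally as $z \mapsto -z + c_0$ in translation charts (because $\gamma^*\omega = -\omega$ for any Abelian differential $\omega$ on a hyperelliptic surface), this alternative would yield only a half-translation chart at each seam and hence only a half-translation structure on $S_C$. Because the construction of $\hat{S}$ uses the translation gluing, this failure is avoided and $S_C$ is genuinely a translation surface.
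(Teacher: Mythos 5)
Your overall strategy --- extend the translation atlas of the interior of $C$ across each seam by exhibiting an explicit chart --- is sound, and it is essentially the paper's polygon argument recast in the language of charts. But there is a gap at the decisive step. When you ``arrange'' $\phi_1$ so that $\phi_1(V_1\cap C)$ is the right half of $\phi_1(V_1)$ and then ``choose'' $\phi_2$ so that $\phi_2(V_2\cap C)$ is the left half, you are not making a free choice: translation charts are determined only up to translation, so which side of the vertical segment $\phi_i(e_i\cap V_i)$ the component $C$ occupies is an invariant of the pair $(C,e_i)$. You may normalize so that $C$ is to the right of $e_1$, but whether $C$ is then to the left of $e_2$ is exactly the question. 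If $C$ lay on the same side of both edges, the top-to-top translation identification would not even be orientation-compatible; the only gluing producing an oriented surface would be $z\mapsto -z+c$, and $S_i$ would be merely a half-translation surface. So the substantive content of the lemma is precisely the claim that $C$ lies on opposite sides of $e_1$ and $e_2$, and your proof asserts this rather than proving it.

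The ingredients needed to close the gap are already present in your final paragraph, just aimed at a side issue (ruling out the alternative gluing) rather than at the main one. Since $\gamma^*\omega=-\omega$, in translation charts $\gamma$ is locally $z\mapsto -z+c_0$ and hence exchanges the left and right sides of a vertical segment. Combine this with $\gamma(C)=C$ (Lemma \ref{l:hypInvolFixesInvariantComponents}) and $\gamma(e_1)=e_2$ (Lemma \ref{l:PairedEdges}): a half-disk in $C$ immediately to the right of $e_1$ is carried by $\gamma$ to a half-disk in $C$ immediately to the left of $e_2$. That is the opposite-sides statement you need --- it is the chart-level version of the paper's observation that the polygons containing $e_1$ and $e_2$ are interchanged by $\gamma$ and differ by a rotation by $\pi$, so the two edges carry opposite boundary orientations. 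Inserted before your chart construction, it makes the rest of your argument go through.
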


\begin{proof}
To show that $S_i$ is a translation surface, as opposed to a half-translation surface, it suffices to show that when we describe $S_i$ as a finite collection of polygons embedded in $\mathbb{R}^2$ whose boundaries are given a counter-clockwise orientation, edges which are identified are parallel and \textit{have opposite orientations}.  (Edge identifications for polygons comprising a half-translation surface are not required to identify oppositely-oriented edges.)  

Represent $S$ by a finite collection $P$ of polygons embedded in $\mathbb{R}^2$ whose boundaries are oriented counter-clockwise, along with ``gluing rules" which identify pairs of parallel, oppositely-oriented edges.  Without loss of generality, we may assume the hyperelliptic involution interchanges pairs of congruent polygons.  We may further assume that each polygon is contained in a unique invariant component of $S$.  Let $P_i \subset P$ be the subset consisting of polygons which make up the invariant component of $S$ which corresponds to piece $S_i$.  

Every edge identification of polygons in $P$ satisfies the requirement that edges have opposite orientation.  The only edges of polygons in $P_i$ which are not glued (in $S$) to other edges of polygons in $P_i$ are those that belong to the boundary of the invariant component corresponding to the piece $S_i$.  To form the piece $S_i$, we glue (via translations) pairs of these edges interchanged by the hyperelliptic involution.  Thus, the restriction of the hyperelliptic involution to $S_i$ fixes the midpoint of such a pair of identified edges, and in a neighborhood of this point, acts as a rotation by $\pi$ radians about this point.  Since the hyperelliptic involution interchanges pairs of congruent polygons, this implies that the embeddings in $\mathbb{R}^2$ of paired congruent polygons differ by a rotation by $\pi$ radians.  Consequently, if $e_1$ and $e_2$ are edges of polygons in $P_i$ such that $e_1$ and $e_2$ are contained the boundary edges of the invariant component $S_i \subset S$ and $e_1$ and $e_2$ are paired  by the hyperelliptic involution, then $e_1$ and $e_2$ have opposite orientations.  Therefore $S_i$ is a translation surface.  
\end{proof}

\begin{lemma} \label{l:propertiesOfPieces}
Let $S$ be an element of $\mathcal{H}^{hyp}(2g-2)$ or $\mathcal{H}^{hyp}(g-1,g-1)$.  
 \begin{enumerate} 

\item \label{it:quotientPieceGenus0} The quotient of each piece $S_i$ of the dissection $\hat{S}$ by the isometric involution that is the restriction of of the hyperelliptic involution of $S$ to the piece $S_i$ has genus 0. 
\item \label{it:tree} The pieces of $\hat{S}$ are arranged in a tree. 
 \end{enumerate}
\end{lemma}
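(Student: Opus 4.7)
The approach for both parts is to pass to the quotient $X := S/\gamma$, which is topologically a sphere by hyperellipticity, and analyze the image in $X$ of the boundary edges of the invariant components.

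The key initial observation is that all cone points of $S$ project to a single point $\ast \in X$: in $\mathcal{H}^{hyp}(2g-2)$ the unique cone point is $\gamma$-fixed (being the preimage of the odd-order zero of the underlying quadratic differential, which is a branch point of the double cover), while in $\mathcal{H}^{hyp}(g-1,g-1)$ the two cone points are interchanged by $\gamma$ (being the two preimages of the even-order zero, which is not a branch point). Since the endpoints of the boundary saddle connections are cone points of $S$, each pair of boundary edges $\{e_1, \gamma(e_1)\}$ of an invariant component projects under $\pi : S \to X$ to a single embedded loop at $\ast$. Let $G \subset X$ denote the planar graph with single vertex $\ast$ and one edge for each such pair.

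For Part (2), $G$ is trivially connected, so by standard planar graph theory every face of $G$ in the sphere $X$ is an open $2$-cell, and Euler's formula $V - E + F = 2$ with $V = 1$ gives $E = F - 1$. Using Lemma \ref{l:hypInvolFixesInvariantComponents}, I would verify that the faces of $G$ are in bijection with the invariant components of $S$: the interior of $X_C := C/\gamma|_C$ is connected and lies in a unique face, and every face is of this form. Hence the adjacency graph of pieces of $\hat{S}$, having $F$ vertices and $E = F - 1$ edges, is a tree once one observes that it is connected (as $S$ is).

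For Part (1), the same analysis shows that each $X_{C_i}$ is a closed topological disk. I would identify $T_i := S_i/\gamma_i$ with the quotient of $X_{C_i}$ by an orientation-reversing involution $\sigma$ on its boundary circle, where $\sigma$ is the composition of $\gamma$ with the inverse of the gluing translation used to form $S_i$ from $C_i$. This $\sigma$ is a reflection of the boundary circle with exactly $2$ fixed points, which correspond to the new fixed points of $\gamma_i$ on $S_i$ at the midpoints of the glued edge pairs. A direct Euler characteristic calculation then shows $\chi(T_i) = \chi(X_{C_i}) + 1 = 2$, so $T_i$ is a sphere and $g(T_i) = 0$.

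The main obstacle will be verifying the single-vertex claim for $G$, which relies on the ramification pattern of the underlying hyperelliptic double covers (ramification at odd-order zeros, no ramification at even-order zeros). A secondary subtlety in Part (1) is checking that the boundary quotient operation consistently increases the Euler characteristic by exactly $+1$, regardless of whether the fixed points of $\sigma$ coincide with vertices of the boundary CW-structure or lie at midpoints of its arcs.
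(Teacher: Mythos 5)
Your argument is correct in outline and reaches both conclusions by a route that is organized genuinely differently from the paper's, though both are at bottom Euler characteristic computations on the quotient sphere $S/\gamma$. The paper never introduces the graph $G$: it writes $\chi(S/\gamma) = -2m + \sum_i \chi(S_i^*)$ directly (each cut-and-reglue of a pair of segments drops the Euler characteristic by $2$), combines this with the connectivity bound $m \geq n-1$ and $\chi(S_i^*) = 2-2g_i^* \leq 2$, and extracts part (1) and then part (2) from a single chain of inequalities. In particular the paper's proof never needs to know how $\gamma$ acts on the cone points of $S$. Your proof, by contrast, cannot avoid the single-vertex claim: once $E$ is identified with the number of gluing pairs and $F$ with the number of components, Euler's formula makes $V=1$ \emph{equivalent} to the desired count $E = F-1$. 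Your justification of $V=1$ is correct (the odd-order cone point is $\gamma$-fixed in $\mathcal{H}^{hyp}(2g-2)$; the two cone points of a surface in $\mathcal{H}^{hyp}(g-1,g-1)$ are swapped because the defining double cover is unramified over the even-order zero; alternatively, if both were fixed, a degree count on the quotient quadratic differential gives $2(g-2)-(2g+2) \neq -4$). What your route buys is a concrete picture --- the quotient pieces are the closures of the faces of a one-vertex graph on the sphere --- at the cost of this extra input about the hyperelliptic components.

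The step I would push back on is the model of the boundary identification in Part (1). The map $\sigma$ is not in general ``a reflection of the boundary circle with exactly $2$ fixed points.'' If the piece comes from a component glued to its neighbors along $k_i$ pairs of boundary edges, the boundary circle of the disk $X_{C_i}$ is subdivided by the $k_i$ preimages of $\ast$ into $k_i$ arcs, one per pair, and the gluing translation composed with $\gamma$ folds \emph{each arc separately} about its own midpoint. For $k_i \geq 3$ this is not a well-defined self-homeomorphism of the circle at all (the two folds adjacent to a subdivision point disagree there); it is an identification with $k_i$ midpoint fixed points that collapses all subdivision points to one point. The conclusion survives: presenting the disk as a $2k_i$-gon with boundary word $x_1 x_1^{-1}\cdots x_{k_i}x_{k_i}^{-1}$ exhibits $T_i$ as the standard polygonal sphere, and the cell count $(1+k_i) - k_i + 1 = 2$ confirms $\chi(T_i) = \chi(X_{C_i}) + 1$ for every $k_i$, so the ``$+1$'' you flagged does hold once $\sigma$ is corrected to the per-arc folding. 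Note also that your ordering (Part (2) before Part (1)) is not optional: the tree conclusion is what rules out a pair of boundary edges having the same component on both sides, which would put a face on both sides of an edge of $G$ and alter the folding analysis.
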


\begin{proof}
Let $S_1$,...,$S_n$ be the pieces of $\hat{S}$.  Let $S^*$ denote the quotient surface $S/\gamma$, and let $S_1^*,...,S_n^*$ denote the quotient of the pieces by the isometric involutions which are the restrictions of the hyperelliptic involution $\gamma$ of $S$ to each piece.  The surface $S$ is formed from the pieces $S_1,...,S_n$ by cutting along and gluing pairs of augmented saddle connections in the pieces.  Let $p_1,...,p_m$ be a list of the pairs of augmented saddle connections which are joined together to form $S$. Let $p_1^*,...,p_m^*$ be a list of the pairs of geodesic segments in $S_1^*,...,S_n^*$ which are the images of $p_1,...,p_m$.  Then the quotient surface $S^*$ is formed from $S_1^*,...,S_n^*$ but cutting along and gluing the pairs $p_1^*,...,p_m^*$.  
 
 We will express the Euler characteristic $\chi(S^*)$ in terms of the Euler characteristics $\chi(S_1^*),...,\chi(S_n^*)$.  We have that $\chi(S^*)=2$ by the definition of a hyperelliptic surface.  The surface obtained by cutting along and gluing a pair of segments  $p_i^*$ is homeomorphic to the connected sum of the two quotient pieces.  The Euler characteristic of the connected sum of any two surfaces $X_1$ and $X_2$ equals $\chi(X_1)+\chi(X_2)-2$.  Therefore,

\begin{equation} \label{eq:part1} 
2= \chi(S^*) = -2m + \sum_{i=1}^n \chi(S_i^*). \end{equation}
Denote the genus of $S_i^*$ by $g_i^*$. Then $\chi(S_i^*) = 2-2g_i^*$.  In order to connect all the pieces $S_1,...,S_n$, we must have that $m \geq n-1$.  Thus,
 \begin{equation} \label{eq:part2} -2m + \sum_{i=1}^n \chi(S_i^*) \leq -2(n-1) + \sum_{i=1}^n \chi(S_i^*) = 2 - \sum_{i=1}^n 2g_i^*. \end{equation}
Combining equations (\ref{eq:part1}) and (\ref{eq:part2}) yields $$2 \leq 2-\sum_{i=1}^n 2g_i^* ,$$ implying $g_i^* = 0$ for all $i$.  This proves part \ref{it:quotientPieceGenus0} of the proposition.  

Using the fact that $g_i^* = 0$ for all $i$, we have 
$$2 = \chi(S^*) = -2m + \sum_{i=1}^n \chi(S_i^*) = -2m + 2n - \sum_{i=1}^n 2g_i^* = 2(n-m).$$  Hence, $m= n-1$.  Since we have $n$ pieces connected along $m=n-1$ slits, the pieces must be arranged as a tree, proving part \ref{it:tree} of the proposition. 
\end{proof}

Combining Lemmas \ref{l:piecesAreTransSurfs} and \ref{l:propertiesOfPieces} yields the following classification of surfaces in the hyperelliptic components $\mathcal{H}^{hyp}(2g-2)$ and $\mathcal{H}^{hyp}(g-1,g-1)$:

\begin{corollary}
Each translation surface in $\mathcal{H}^{hyp}(2g-2)$ or $\mathcal{H}^{hyp}(g-1,g-1)$ with $n$ invariant components has a canonical description as consisting of $n$ hyperelliptic translation surfaces which have slits cut along augmented Weierstrass edges, and these hyperelliptic translation surfaces are glued together in a tree configuration by identifying pairs of slits via translations.  
\end{corollary}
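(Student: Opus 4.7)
The plan is to assemble the corollary as a direct synthesis of the lemmas proved in this section. Given a translation surface $S$ in $\mathcal{H}^{hyp}(2g-2)$ or $\mathcal{H}^{hyp}(g-1,g-1)$ with $n$ invariant components, I would work with the dissection $\hat{S}=\{S_1,\ldots,S_n\}$ already constructed above and verify that it realizes $S$ as the claimed tree of hyperelliptic pieces glued along augmented Weierstrass edges.

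First I would invoke Lemma \ref{l:piecesAreTransSurfs} to conclude that every piece $S_i$ is a genuine translation surface rather than merely a half-translation surface. Next, by part (1) of Lemma \ref{l:propertiesOfPieces}, the quotient of each $S_i$ by the restriction of the hyperelliptic involution of $S$ has genus zero, so that restricted involution is itself a hyperelliptic involution on $S_i$, exhibiting each piece as a hyperelliptic translation surface. The tree configuration is then immediate from part (2) of Lemma \ref{l:propertiesOfPieces}.

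The step that requires a small verification is the claim that the gluings occur along augmented Weierstrass edges. Recall from the construction of $\hat{S}$ that each pair of boundary edges of an invariant component $C$ is interchanged by the hyperelliptic involution of $S$ (Lemma \ref{l:PairedEdges}) and glued in pairs to form a vertical augmented saddle connection in the corresponding piece $S_C$. As remarked immediately after Lemma \ref{l:piecesAreTransSurfs}, the restricted involution on $S_C$ fixes the midpoint of such an augmented saddle connection, so that midpoint is a Weierstrass point of $S_C$; hence this augmented saddle connection is an augmented Weierstrass edge. Reversing the dissection procedure — cutting along these slits in each $S_i$ and reidentifying the paired slits across pieces via the original translation identifications of vertical boundary edges in $S$ — recovers $S$.

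Canonicity of the description follows because the vertical invariant decomposition of $S$ is itself canonical and no auxiliary choices enter the construction of $\hat{S}$. The main (and essentially only) point that is not already contained verbatim in a preceding lemma is the identification of the slit midpoints with Weierstrass points of the pieces; everything else is bookkeeping on top of Lemmas \ref{l:hypInvolFixesInvariantComponents}--\ref{l:propertiesOfPieces}.
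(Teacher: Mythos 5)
Your proof is correct and follows essentially the same route as the paper, which simply states the corollary as the combination of Lemmas \ref{l:piecesAreTransSurfs} and \ref{l:propertiesOfPieces}, with the Weierstrass-point observation already recorded in the remark preceding Lemma \ref{l:piecesAreTransSurfs}. You have merely made explicit the bookkeeping the paper leaves implicit.
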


We now define the \textbf{invariant component diagram} associated to a surface $S$ in $\mathcal{H}^{hyp}(2g-2)$ or $\mathcal{H}^{hyp}(g-1,g-1)$, a graph-like object that describes how the pieces of $\hat{S}$ can be connected to form $S$.  The graph has $n$ vertices, where $n$ is the number of pieces of $\hat{S}$, and a bijection relates the vertices with the pieces of $\hat{S}$.  The graph has a number of \textit{half-edges}, line segments which have one end at a vertex and the other end free-floating.  There are two different types of half-edges, which we will call solid half-edges and dotted half-edges.  Two solid half-edges incident to two distinct vertices may joint to form a full solid edge.  

Each solid half-edge incident to a vertex represents a vertical augmented saddle connection on the corresponding piece of $\hat{S}$.  Each dotted half-edge incident to a vertex represents a pair of augmented critical leaves of the vertical foliation of the corresponding piece of $\hat{S}$ which do not form an augmented saddle connection (i.e. a dotted half-edge represents a ``broken" augmented saddle connection).  Define a bijection relating the half-edges of the graph and the augmented saddle connections (intact or ``broken") of the pieces of $\hat{S}$.  

To form $S$ from the pieces of $\hat{S}$, we cut along some of the vertical augmented saddle connections in the pieces, turning these augmented saddle connections into slits, and then we glue together pairs of slits.  For each pair of these slits which are glued together to form $S$, connect the corresponding two solid half-edges in the graph so that they form a full solid edge.  

\medskip

{\bf Algorithmic definition of the minimal component diagram associated to $S$:}
\begin{enumerate}
\item Draw $n$ labeled vertices, where $n$ is the number of pieces of the dissection $\hat{S}$. Say that vertex $v_i$ represents piece $C_i$ of the dissection $\hat{S}$.
\item For each $i$, draw $k_i$ solid half-edges incident to vertex $v_i$, where $k_i$ is the number of augmented vertical saddle connections in the surface $C_i$.  
\item For each $i$, draw $j_i$ dotted half-edges incident to vertex $v_i$, where $j_i$ is the number of augmented ``broken" vertical saddle connections in the surface $C_i$. (A ``broken" saddle connection means a pair of critical leaves of the vertical foliation which are of infinite length and do not form a saddle connection.)
\item For each pair of vertical saddle connections in the original surface $S$ which were boundary edges of components $C_m$ and $C_n$ (and which we cut when forming the dissection $\hat{S}$), connect a pair of solid half-edges incident to vertices $v_m$ and $v_n$ to form a full solid edge between these two vertices.  
\end{enumerate}

  \begin{figure}[hb] 
  \centering
  \includegraphics[width= \linewidth]{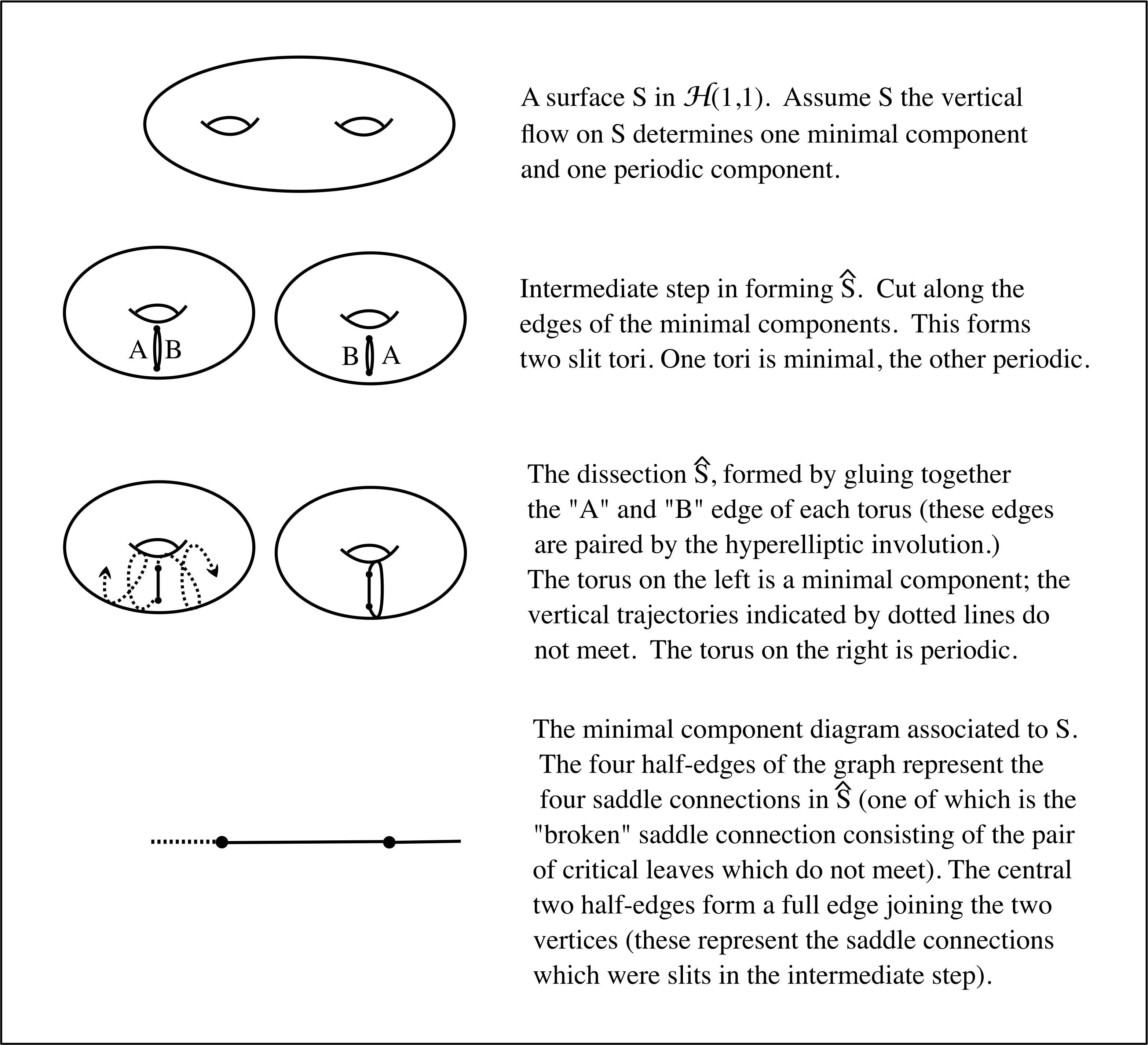}
  \caption[]
  {Sample invariant component diagram.}  
  \label{f:algorithm}
  \end{figure}

\begin{lemma} \label{l:totalConeAngle}
Let $S$ be in $\mathcal{H}^{hyp}(2g-2)$ or $\mathcal{H}^{hyp}(g-1,g-1)$.  Denote by $n_i$ the number of half-edges incident to the $i^{\textrm{th}}$ vertex of the invariant component diagram associated to $S$.  Then the total cone angle of $S$ equals $ 2\pi \sum_i n_i$.
\end{lemma}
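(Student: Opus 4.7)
My plan is to count the vertical half-rays (separatrices) at augmented cone points in each piece $C_i$ of $\hat{S}$ and relate the count to both the total cone angle of $S$ and the number of half-edges $n_i$. The starting observation is that at any point of cone angle $2\pi(\ell+1)$---whether a cone point of $S$ or an augmented cone point of some piece---the $2(\ell+1)$ vertical half-rays alternate with $2(\ell+1)$ sectors of angle $\pi$, so the cone angle there equals $\pi$ times the number of vertical half-rays at that point.

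First I would establish the identity
\[\text{total cone angle of }S \;=\; \pi\sum_i \nu_i,\]
where $\nu_i$ denotes the total number of vertical half-rays at augmented cone points of $C_i$. Each sector at a cone point $q$ of $S$ lies in a unique invariant component $C$; the dissection procedure (cut along boundary saddle connections, reglue paired boundary edges via the hyperelliptic involution) may merge several copies of $q$ into a single augmented cone point but does not create or destroy sectors. Hence the sectors at augmented copies of $q$ in the piece $C_i$ corresponding to $C$ are exactly the sectors of $q$ contained in $C$, and summing over all pieces and all cone points recovers every sector of $S$.

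Next I would classify the vertical half-rays in $C_i$. Every such half-ray either terminates at another augmented cone point (in which case it is one end of an augmented vertical saddle connection) or does not terminate. Each of the $k_i$ augmented saddle connections has exactly two ends, contributing $2k_i$ terminating half-rays. The restriction of the hyperelliptic involution to $C_i$ is well defined (because the regluing is performed along $\gamma$) and preserves the vertical foliation setwise while reversing vertical orientation; in particular it has no fixed half-ray, so it partitions the non-terminating half-rays into two-element orbits. These orbits are exactly the $j_i$ broken pairs represented by dotted half-edges, so the non-terminating half-rays number $2j_i$. Combining,
\[\nu_i \;=\; 2k_i + 2j_i \;=\; 2n_i, \qquad\text{so}\qquad \text{total cone angle of }S \;=\; 2\pi\sum_i n_i.\]

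The main obstacle is the bookkeeping in the sector-preservation step: one must check that the involution-induced regluing identifies boundary endpoints in a way that precisely combines their sector neighborhoods (so that the sum of cone angles at augmented copies of $q$ in $C_i$ equals $\pi$ times the number of sectors of $q$ lying in $C$), and that augmented cone points of cone angle $2\pi$---regular marked points---are counted under the ``two half-rays per marked point'' convention, which is what makes the formula also work in the $g=1$ case.
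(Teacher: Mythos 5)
Your proposal is correct and follows essentially the same route as the paper: both count vertical separatrices at the augmented cone points of each piece (cone angle equals $\pi$ times the number of such half-rays), observe that each intact or broken augmented saddle connection accounts for exactly two of them, and note that the dissection preserves the total angle. Your version merely makes explicit two bookkeeping points the paper leaves implicit (the sector-preservation under regluing and the pairing of non-terminating separatrices), which is a harmless refinement rather than a different argument.
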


\begin{proof}
A cone point with cone angle $k\pi$ has $k$ vertical rays (going either ``up" or ``down") which emanate from the cone point.  A vertical (possibly augmented) saddle connection requires two such rays (one going up, one going down, meeting in the middle).  A ``broken" saddle connection (i.e. a pair of non-closed augmented critical leaves of the vertical foliation) also requires two such rays.  Each half-edge incident to a vertex in the invariant component diagram represents a vertical augmented (possibly broken) saddle connection in the corresponding piece of the dissection $\hat{S}$.  Hence, $2\pi$ multiplied by the number of half-edges incident to a fixed vertex equals the sum over the augmented cone points in that piece of the cone angle at those points.  Hence, the total cone angle of $S$ equals the sum over all the augmented cone points in all the pieces in $\hat{S}$ of the cone angle at those points.  
\end{proof}

\begin{lemma} \label{l:allPossibleSaddleConnections}
Let $S$ be a translation surface such that every critical leaf of the vertical foliation is closed.  Then $S$ admits a cylinder decomposition in the vertical direction. 
\end{lemma}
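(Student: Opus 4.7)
The plan is to cut $S$ along the union $\Gamma$ of all vertical saddle connections and show that each connected component of $S \setminus \Gamma$ is a single vertical cylinder of periodic orbits. Since every vertical separatrix is closed (a saddle connection) by hypothesis, and each singularity has only finitely many vertical separatrices, $\Gamma$ is a finite graph embedded in $S$.

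Fix a connected component $V$ of $S \setminus \Gamma$. Then $V$ is open, invariant under the vertical flow, and $\partial V \subset \Gamma$. By construction, $V$ contains no singularities in its interior, and every vertical separatrix lies in $\Gamma$ rather than passing through the interior of $V$. Consequently, every vertical orbit through a point of $V$ is bi-infinite and avoids all singularities.

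The central step is to analyze the first-return map $R: T \to T$ of the vertical flow to a small horizontal transverse segment $T \subset V$. Because no orbit through $T$ ever encounters a singularity, $R$ has no break points of interval-exchange type; because the translation surface has trivial linear holonomy, $R$ is a local isometry that preserves horizontal position, hence locally (and globally, by connectedness of $T$) a rigid horizontal translation. Since $R$ must biject a bounded interval onto a subset of itself, the translation amount must be zero, so $R$ is the identity wherever it is defined. By Poincar\'e recurrence applied to the area-preserving vertical flow on the finite-area set $V$, $R$ is defined on a dense subset of $T$, so a dense set of orbits through $T$ is periodic of a common period $p$. The rigidity of the flat structure then implies that the set of points in $V$ whose vertical orbit is periodic of period $p$ is both open (nearby vertical curves remain at fixed horizontal distance and thus close up in the same time) and closed (by continuity of $\phi_p$); being nonempty and $V$ connected, this set exhausts $V$. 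Therefore $V$ is foliated by closed vertical orbits of period $p$, i.e., $V$ is a vertical cylinder.

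Applying this to each connected component of $S \setminus \Gamma$ yields the desired cylinder decomposition of $S$. The subtlest point is justifying that $R$ is a single translation rather than a genuine interval exchange; this relies crucially on the observation that every vertical separatrix has been placed in $\Gamma = \partial V$ by the cut, so no orbit in the interior of $V$ can encounter a singularity on its way back to $T$.
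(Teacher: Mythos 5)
There is a genuine gap at the central step of your argument. You claim that the first-return map $R$ of the vertical flow to a short horizontal segment $T\subset V$ ``has no break points of interval-exchange type'' because no orbit through $T$ meets a singularity, and hence that $R$ is globally a single rigid translation, necessarily the identity. But discontinuities of a first-return map to a \emph{segment} do not arise only from singularities: they also arise from orbits that return to the circle (or larger transversal) containing $T$ at a point outside $T$, equivalently from the forward orbits of the endpoints of $T$. The standard example is the linear flow in an irrational direction on an unmarked flat torus: there are no singularities whatsoever, yet the first-return map to a short horizontal segment is a nontrivial exchange of two intervals (the map induced by an irrational rotation), which is locally but not globally a translation. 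Your argument, applied verbatim to that torus (which vacuously satisfies ``every critical leaf is closed'' since it has no critical leaves), would conclude that the torus is a vertical cylinder, which is false. Ruling out exactly this irrational-rotation behavior on $V$ is the entire content of the lemma, so the step cannot be waved through; it must use the hypothesis that the separatrices bounding $V$ are \emph{closed} in an essential way. One correct completion of your cutting strategy is via Gauss--Bonnet on the cut-open closure of $V$: since no separatrix enters $V$, every boundary singularity meets $V$ only in sectors of angle $\pi$, forcing the Euler characteristic of the piece to be $0$ and the piece to be an annulus, i.e.\ a cylinder; if some sector had angle at least $2\pi$, a separatrix would enter $V$, and in a minimal component such a half-leaf would be dense, contradicting that it is a compact saddle connection.

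For comparison, the paper's proof avoids return maps entirely. It observes that a regular leaf passing at horizontal distance $\epsilon$ from a vertical saddle connection remains at horizontal distance $\epsilon$ from the union $\Gamma$ of all vertical saddle connections for all time, precisely because every separatrix it approaches is again a closed saddle connection; since $\Gamma$ has finite total length, such a leaf has finite length and is therefore periodic. Hence every saddle connection has a neighborhood of periodic points, and since the boundary of any invariant component consists of vertical saddle connections, no minimal component can exist. You may keep your overall plan, but you must replace the ``$R$ is a single translation'' step with an argument of one of these two kinds.
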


\begin{proof}
Assume every critical leaf of the vertical foliation of $S$ is closed.  Let $x$ be a point in $S$ that is located some small distance $\epsilon$ in the horizontal direction from a vertical saddle connection.  Then every point of the leaf passing through $x$ of the vertical foliation is also $\epsilon$ away from a vertical saddle connection.  Because $S$ has finitely many vertical saddle connections, each of which is of finite length, the leaf passing through $x$ must be periodic.  As $x$ was arbitrary, this implies every vertical saddle connection has a neighborhood of periodic points.  Since the boundaries between invariant components of $S$ are vertical saddle connections, this implies every regular point of $S$ is periodic.  
\end{proof}

Dotted half-edges in the invariant component diagram represent ``broken" saddle connections, so Lemma \ref{l:allPossibleSaddleConnections} immediately implies:

\begin{corollary}
Each vertex in the invariant component diagram which corresponds to a minimal component in $S$ has at least one incident dotted half-edge.  
\end{corollary}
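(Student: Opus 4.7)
The plan is to argue by contraposition: assume the vertex $v_i$ corresponding to an invariant component $C$ of $S$ has no incident dotted half-edges, and deduce that $C$ cannot be minimal. By the algorithmic definition of the invariant component diagram, the absence of dotted half-edges incident to $v_i$ means that the corresponding piece $S_i$ of the dissection $\hat{S}$ has no ``broken'' augmented vertical saddle connections; equivalently, every critical leaf of the vertical foliation of $S_i$ is closed.

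Lemma \ref{l:allPossibleSaddleConnections} then applies directly to $S_i$, forcing $S_i$ to admit a cylinder decomposition in the vertical direction. In particular, every regular point of $S_i$ lies on a periodic vertical orbit.

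The remaining point is to transfer this information from $S_i$ back to $C$. The key observation is that the boundary of $C$ in $S$ consists of vertical saddle connections, and a vertical orbit through a regular point of $C$ runs parallel to these boundary edges; consequently, such an orbit cannot cross the boundary of $C$. Thus for every regular point $p$ in the interior of $C$, the vertical orbit of $p$ computed in $C$ coincides with the vertical orbit of $p$ computed in $S_i$ (the gluings used to build $S_i$ from $C$ take place only along the boundary, which the orbit never meets). If $C$ were a minimal component, then by definition $C$ would contain a regular point whose bi-infinite vertical orbit is recurrent and non-periodic; but by the previous paragraph this same orbit in $S_i$ is periodic, a contradiction.

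I do not expect a significant obstacle: the entire content of the statement is packaged in Lemma \ref{l:allPossibleSaddleConnections}, and the only thing requiring a moment of care is the transfer of dynamics from $S_i$ to $C$, which follows from the fact that the vertical flow cannot cross vertical boundary edges.
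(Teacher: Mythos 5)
Your argument is correct and follows the same route as the paper, which derives the corollary directly from Lemma \ref{l:allPossibleSaddleConnections} (no dotted half-edges means no broken saddle connections, so every critical leaf of the piece is closed and the piece is periodic). The extra care you take in transferring the dynamics from the piece $S_i$ back to the component $C$ is a detail the paper leaves implicit, and it is handled correctly.
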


\begin{lemma} \label{l:ConeAngle}
Let $S$ be a translation surface in $\mathcal{H}^{hyp}(2g-2)$ or $\mathcal{H}^{hyp}(g-1,g-1)$ which has $m$ minimal components and $p$ periodic components.  Then the total cone angle of $S$ is at least $2\pi(3m+2p-2)$.  
\end{lemma}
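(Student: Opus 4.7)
The plan is to convert the cone-angle inequality into a combinatorial one on the invariant component diagram, then obtain the bound by counting half-edges of each type. By Lemma \ref{l:totalConeAngle}, the total cone angle of $S$ equals $2\pi \sum_i n_i$, where the sum runs over the vertices $v_i$ of the invariant component diagram and $n_i$ denotes the total number of half-edges (solid plus dotted) incident to $v_i$. So it suffices to show
\[
\sum_i n_i \;\geq\; 3m+2p-2.
\]

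To get this bound I would count solid and dotted half-edges separately. By the tree property from Lemma \ref{l:propertiesOfPieces}, the $m+p$ pieces of the dissection $\hat S$ are arranged in a tree, so the invariant component diagram contains exactly $m+p-1$ full solid edges — one for each pair of boundary edges of $S$ that was glued to join two pieces in forming $\hat S$ (step 4 of the algorithmic definition). Each full solid edge contributes two solid half-edges, one at each of its two distinct endpoints, so the diagram has at least $2(m+p-1)$ solid half-edges in total. Separately, the corollary immediately preceding the statement tells us that every one of the $m$ vertices corresponding to a minimal component of $S$ carries at least one dotted half-edge, so the diagram has at least $m$ dotted half-edges.

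Adding these two lower bounds yields
\[
\sum_i n_i \;\geq\; 2(m+p-1) + m \;=\; 3m+2p-2,
\]
which combined with Lemma \ref{l:totalConeAngle} gives the claimed lower bound on the total cone angle of $S$. I do not anticipate a substantive obstacle: once the structural scaffolding around the invariant component diagram is in place — the hyperelliptic involution fixes each invariant component (Lemma \ref{l:hypInvolFixesInvariantComponents}) and pairs its boundary edges (Lemma \ref{l:PairedEdges}), the pieces are themselves hyperelliptic translation surfaces (Lemma \ref{l:piecesAreTransSurfs}) arranged in a tree with genus-zero quotients (Lemma \ref{l:propertiesOfPieces}), and the cone angle is read off the diagram (Lemma \ref{l:totalConeAngle}) — the lemma reduces to the one-line counting argument above, with the only mild bookkeeping being the identification of each tree edge between pieces with exactly one full solid edge in the diagram.
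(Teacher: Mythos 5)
Your proof is correct and follows essentially the same route as the paper: reduce to counting half-edges via Lemma \ref{l:totalConeAngle}, then bound the count below by $m$ dotted half-edges (one per minimal vertex) plus $2(m+p-1)$ solid half-edges from the tree structure. The only cosmetic difference is that you invoke the tree property to get exactly $m+p-1$ full solid edges, whereas the paper only uses connectedness to get at least that many; both suffice.
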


\begin{proof}
The invariant component diagram associated to $S$ must have
\begin{enumerate}
\item $m+p$ vertices (of which $m$ represent minimal components and $p$ represent periodic components),
\item at least one dotted half-edge incident to each of the ``minimal" vertices, and
\item enough solid full edges to make the graph connected.
\end{enumerate}
The diagram may have additional half-edges, which may be either solid or dotted.  At a minimum, then, the diagram has $m$ dotted half-edges, and $m+p-1$ solid full edges.  Each of the solid full edges consists of two half-edges.  Thus the total number of half-edges in the invariant component diagram is at least $m+2(m+p-1) = 3m+2p-2$.  By Lemma \ref{l:totalConeAngle}, the total cone angle of $S$ is therefore at least $2\pi(3m+2p-2)$.   
\end{proof}

Proposition \ref{p:upperBounds} follows immediately from Lemma \ref{l:ConeAngle} and the fact that the total cone angle of a surface in $\mathcal{H}(2g-2)$ is $2\pi(2g-1)$ while the total cone angle of a surface in $\mathcal{H}(g-1,g-1)$ is $2\pi(2g)$.  

\begin{proposition} \label{p:upperBounds}
Fix $g \in \mathbb{N}$.  Let $(p,m)$ be a pair of nonnegative integers, at least one of which is nonzero. 
\begin{enumerate}
\item If there exists a translation surface in the hyperelliptic component $\mathcal{H}^{hyp}(2g-2)$ with precisely $p$ periodic components and $m$ minimal components then $$3m+2p-1 \leq 2g.$$  
\item If there exists a translation surface in the hyperelliptic component $\mathcal{H}^{hyp}(g-1,g-1)$ with precisely $p$ periodic components and $m$ minimal components then $$3m+2p-2 \leq 2g.$$ 
\end{enumerate}
\end{proposition}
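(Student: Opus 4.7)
The plan is essentially to combine Lemma \ref{l:ConeAngle} with the Gauss--Bonnet calculation of total cone angle in each stratum, since all the substantive combinatorial work has already been done in establishing the lower bound $2\pi(3m+2p-2)$ on total cone angle.

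First, I would compute the total cone angle of a surface in each stratum using the standard formula that a cone point of order $k$ (in the stratum notation $\mathcal{H}(k_1,\ldots,k_n)$) has cone angle $2\pi(k+1)$. For $\mathcal{H}(2g-2)$ this gives a single cone point of angle $2\pi(2g-1)$, hence total cone angle $2\pi(2g-1)$. For $\mathcal{H}(g-1,g-1)$ the two cone points each contribute $2\pi g$, for a total of $2\pi(2g)$. Since $\mathcal{H}^{hyp}(2g-2) \subseteq \mathcal{H}(2g-2)$ and likewise for the second hyperelliptic component, the same totals apply.

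Next, I would invoke Lemma \ref{l:ConeAngle}, which states that any surface in either hyperelliptic component with $m$ minimal and $p$ periodic components has total cone angle at least $2\pi(3m+2p-2)$. Combining the two facts:
\begin{itemize}
\item In case (1), $2\pi(3m+2p-2) \leq 2\pi(2g-1)$, which rearranges to $3m+2p-1 \leq 2g$.
\item In case (2), $2\pi(3m+2p-2) \leq 2\pi(2g)$, which is precisely $3m+2p-2 \leq 2g$.
\end{itemize}

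There is really no obstacle here: the proof is a one-line consequence of Lemma \ref{l:ConeAngle} together with the standard cone-angle totals. All the genuine difficulty lies upstream, in establishing (via the invariant component diagram, the tree structure of Lemma \ref{l:propertiesOfPieces}, and the fact that each minimal component forces at least one dotted half-edge) that minimal components cost three half-edges and periodic components cost two.
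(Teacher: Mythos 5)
Your proposal is correct and matches the paper's argument exactly: the paper likewise states that Proposition \ref{p:upperBounds} follows immediately from Lemma \ref{l:ConeAngle} together with the total cone angles $2\pi(2g-1)$ for $\mathcal{H}(2g-2)$ and $2\pi(2g)$ for $\mathcal{H}(g-1,g-1)$. The arithmetic rearrangement in each case is as you describe.
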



\section{Constructing surfaces with specific numbers of periodic and minimal components}

We will use invariant component diagrams as ``blueprints" for constructing surfaces in $\mathcal{H}^{hyp}(2g-2)$ and $\mathcal{H}^{hyp}(g-1,g-1)$ with specific numbers of periodic and minimal components.   First, we will construct the ``building blocks" we will be using -- hyperelliptic surfaces with given numbers of Weierstrass edges and ``broken" saddle connections.  

\begin{lemma} \label{l:technical}
Let $X = \mathbb{R}/\mathbb{Z}$ and $Y = \mathbb{R}/\mathbb{Z}$ with the standard ordering on $[0,1)$.  Let $x_0 < x_1 < ... x_{n-1}$ be $n$ distinct points in $X$.  For each $i$, define $y_i \in Y$ by $y_i = -x_i$.  For each $i$, define $\varphi_i:[x_i,x_{i+1 \bmod{n}}]\rightarrow [y_{i+1\bmod{n}},y_i]$ by $$\varphi_i(x)=x+y_{i+1\bmod{n}} - x_i.$$ 
Let $\sim$ be the equivalence relation on $X \cup Y$ generated by the $\varphi_i$'s. 
Then
\begin{enumerate}
\item  if $n$ is odd, $\{x_0,...,x_{n-1},y_0,...,y_{n-1}\}/\sim$ consists of a single equivalence class, 
\item if $n$ is even, $\{x_0,...,x_{n-1},y_0,...,y_{n-1}\}/\sim$ consists of two equivalence classes, each of which has $n$ preimages.  
\end{enumerate}  
\end{lemma}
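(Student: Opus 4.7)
My plan is to compute exactly what $\varphi_i$ does to the marked points, recognize that the resulting identifications shift indices by $2$, and finish by analyzing the $+2$ action on $\mathbb{Z}/n\mathbb{Z}$ according to the parity of $n$.

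Substituting into $\varphi_i(x) = x + y_{i+1 \bmod n} - x_i$ and using $y_j = -x_j$ in $\mathbb{R}/\mathbb{Z}$, one checks immediately that $\varphi_i(x_i) = y_{i+1}$ and $\varphi_i(x_{i+1}) = y_i$. Thus each $\varphi_i$ contributes exactly two generating identifications among marked points, namely $x_i \sim y_{i+1}$ and $x_{i+1} \sim y_i$, and is otherwise a bijection between the open intervals $(x_i, x_{i+1})$ and $(y_{i+1}, y_i)$. Since these open intervals are disjoint and together with the marked set partition $X$ and $Y$, any chain of $\varphi$-applications beginning and ending in the marked set must re-enter it through an endpoint. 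Consequently the complete list of identifications among marked points is the transitive closure of the ones above.

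Collecting the two identifications at $x_i$ (coming from $\varphi_{i-1}$ and $\varphi_i$) gives $x_i \sim y_{i-1}$ and $x_i \sim y_{i+1}$, whence $y_{i-1} \sim y_{i+1}$; symmetrically, $x_{i-1} \sim x_{i+1}$ for every $i$. So on the marked set, $\sim$ is generated by the shift $i \mapsto i+2 \pmod n$ acting separately on $x$-indices and on $y$-indices, together with the cross-identifications $x_i \sim y_{i+1}$ that pair $x$-orbits with $y$-orbits. When $n$ is odd, $\gcd(2,n)=1$ and the shift is transitive, so all $2n$ marked points form a single class. When $n$ is even the shift has two orbits on each side---the even- and odd-indexed elements---and $x_i \sim y_{i+1}$ pairs each $x$-orbit with the $y$-orbit of opposite parity, giving exactly two equivalence classes of size $n$ each. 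The step I expect to require the most care is the ``no extra identifications'' claim in the second paragraph; once that is in hand the parity analysis is purely combinatorial.
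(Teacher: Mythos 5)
Your argument is correct and complete: the verification that $\varphi_i(x_i)=y_{i+1}$ and $\varphi_i(x_{i+1})=y_i$, the observation that marked points can only be identified with marked points (since each $\varphi_i^{\pm 1}$ carries endpoints to endpoints and open interiors to open interiors), and the reduction to the transitivity of the $+2$ shift on $\mathbb{Z}/n\mathbb{Z}$ together give exactly the stated dichotomy, including the count of $n$ points per class when $n$ is even. The paper states this lemma without proof, so there is nothing to compare against; your write-up supplies the intended elementary combinatorial argument, and the one step you flag as delicate (no extra identifications) is handled adequately.
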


For each natural number $n$, we will define hyperelliptic surfaces $P_n$ and $M_n$.  $P_n$ will consist of a single periodic component and $M_n$ will consist of a single minimal component.  The surface $P_n$ will have $n$ vertical Weierstrass edges, and $M_n$ will have $n-1$ vertical Weierstrass edges.  (In the cases $n=1$ and $n=2$, $P_n$ and $M_n$ have no true cone points, but we will think of them as having $n$ augmented cone points.  $P_1$ has one vertical augmented Weierstrass edge, $M_1$ has no vertical augmented Weierstrass edges (and will not be used), $P_2$ has two vertical augmented Weierstrass edges, and $M_2$ has one vertical augmented Weierstrass edge.)  If $n$ is even, $P_n$ and $M_n$ will be genus $\frac{n}{2}$ surfaces in $\mathcal{H}^{hyp}(\frac{n}{2}-1,\frac{n}{2}-1)$.  If $n$ is odd, $P_n$ and $M_n$ will be genus $\frac{n+1}{2}$ surfaces in $\mathcal{H}^{hyp}(n-1)$.  

The surfaces $P_n$ and $M_n$ do not have boundary.  To connect two such surfaces together, we will cut along a vertical Weierstrass edge in each, and glue along the resulting slits.    
\medskip

{\bf The surfaces $P_n$:}
For $n \in \mathbb{N}$, define $P_n$ as follows.  Begin with a flat rectangle which measures $n$ units in the vertical direction and $1$ unit in the horizontal direction.  Partition each of the vertical sides into $n$ disjoint segments of length $1$.  On the left side of the rectangle, label the segments $s_1,...,s_n$, in order, with $s_1$ at the top and $s_n$ at the bottom.  On the right side of the rectangle, again label the segments $s_1,...,s_n$, but use the opposite order: label the top segment $s_n$, and the bottom segment $s_1$.   For each $i$, identify the two vertical segments labeled $s_i$ via a translation.  Identify the two horizontal sides of the rectangle via a translation in the vertical direction (so that the vertical sides of the rectangle each become a closed curve).  This surface is $P_n$.  

The only cone point(s) in $P_n$ are formed by identifying the endpoints of the segments $s_i$.  By Lemma \ref{l:technical}, if $n$ is odd all the endpoints of the $s_i$'s are identified to form a single cone point.  If $n$ is even, Lemma \ref{l:technical} shows that the endpoints of the $s_i$'s form two distinct cone points with equal cone angle.  The total cone angle of $P_n$ is $2\pi n$, so $P_n$ has genus $\frac{n+1}{2}$ if $n$ is odd and genus $\frac{n}{2}$ if $n$ is even.  It is easy to see that $P_n$ admits an isometric involution with $2g+2$ fixed points:  rotating the entire rectangle by a half-turn fixes the midpoint of each $s_i$, the midpoint of the rectangle, the midpoint of the horizontal sides of the rectangle, and the unique cone point if $n$ is odd.  Thus, $P_n$ admits a hyperelliptic involution.  

\medskip

{\bf The surfaces $M_n$:} For $n \in M$, define $M_n$ as follows.  Begin with the rectangular representation of the surface $P_n$ defined above.  Perform a vertical shear (apply a matrix $\bigl(\begin{smallmatrix} 1&0\\ t&1 \end{smallmatrix} \bigr)$ ) so that the two segments labeled $s_1$ on the vertical sides of the rectangle are at the same vertical height.  Then the horizontal path from the top of the left $s_1$ segment to the top of the right $s_1$ segment is a closed horizontal curve, as is the horizontal path from the bottom of the left $s_1$ segment to the bottom of the right $s_1$ segment.  Apply an irrational horizontal shear (a matrix $\bigl(\begin{smallmatrix} 1&\alpha\\ 0&1 \end{smallmatrix} \bigr)$ where $\alpha$ is irrational) to the rectangle whose vertical sides are the two $s_1$ segments and whose horizontal sides are the closed horizontal curves connecting the endpoints of the $s_1$ segments.  The resulting surface is $M_n$.  

Since $M_n$ is obtained by continuously deforming $P_n$ and $P_n$ is in the hyperelliptic connected component, $M_n$ is also in the hyperelliptic connected component.  $M_n$ has the same number and type of cone points as $P_n$.  The irrational twist on the rectangle whose vertical edges are $s_1$ destroys one vertical Weierstrass edge and makes the vertical foliation of $M_n$ minimal. 

\medskip

 \begin{figure}[!htpb]
  \centering
  \includegraphics[width= .35\linewidth]{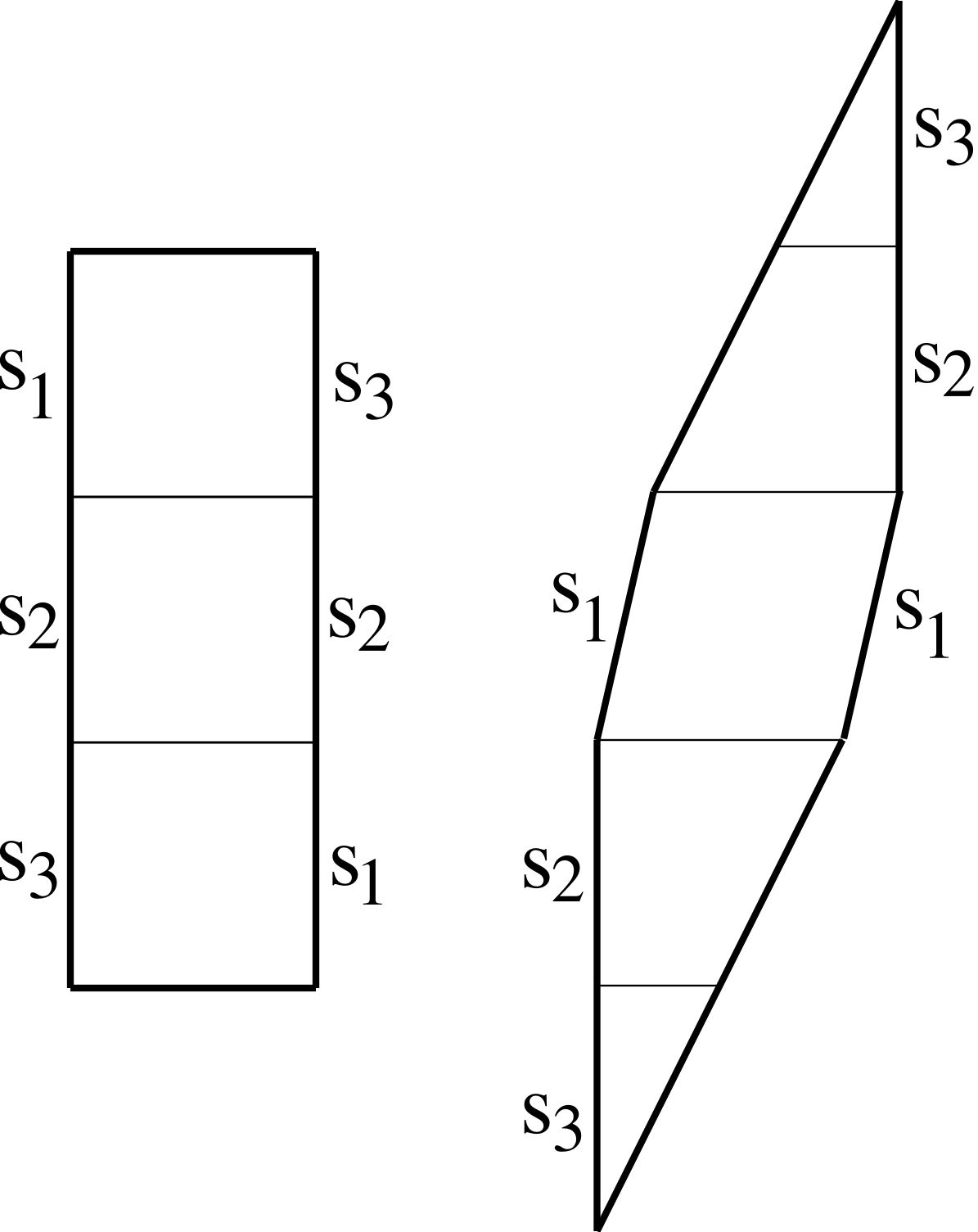}
  \caption[]
  {Polygonal representations of the surfaces $P_3$ (left) and $M_3$ (right).}  
  \label{f:P3M3}
  \end{figure}
  
  \medskip

We now describe how to construct surfaces associated to certain invariant component diagrams (those in which every vertex has at most one incident dotted half-edge).  A vertex with $n \in \mathbb{N}$ incident (half-)edges corresponds to a surface $P_n$ if all (half-)edges are solid and corresponds to a surface $M_n$ if precisely one of the (half-)edges is dotted.  For each solid full edge which connects two vertices, cut along a vertical Weierstrass edge in each to form slits, and glue the slits together via a translation.  (Recall that for $n=1$ and $n=2$, we interpret the surfaces $P_n$ and $M_n$ to have $n$ augmented cone points, and use the augmented Weierstrass edges instead of true saddle connections.)  

We will now define two families of invariant component diagrams.   For each $k \in \mathbb{N}$ and each pair of nonnegative integers $(p,m)$, at least one of which is nonzero and such that $3m+2p-2 \leq k$, we will define two invariant component diagrams: the \emph{p-central diagram} $\mathcal{D}^{per}_{(k,p,m)}$ and the \emph{m-central diagram} $\mathcal{D}^{min}_{(k,p,m)}$.  

Construct the $p$-central diagram $\mathcal{D}^{per}_{(k,p,m)}$ as follows: Draw one central vertex.  Now draw $m+p-1$ other vertices and connect each of these vertices to the central vertex with a full solid edge.  Add a dotted half-edge to $m$ of the non-central vertices.  Add $y=k-(3m+2p-2)$ solid half-edges to the central vertex.  

Construct the $m$-central diagram $\mathcal{D}^{min}_{(k,p,m)}$ as follows:  Draw one central vertex.  Now draw $m+p-1$ other vertices and connect each of these vertices to the central vertex with a full solid edge.  Add a dotted half-edge to the central vertex and to $m-1$ of the non-central vertices.  Add $y=k-(3m+2p-2)$ solid half-edges to the central vertex.  

\begin{lemma} \label{l:numberOfConePts}
Fix an integer $k \geq 3$.  The translation surface associated to an invariant component diagram $\mathcal{D}^{per}_{(k,p,m)}$ or $\mathcal{D}^{min}_{(k,p,m)}$ has one cone point if $k$ is odd and two equal cone points if $k$ is even. 
\end{lemma}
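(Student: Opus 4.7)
The plan is to deduce the cone-point structure of $S$ from two inputs: its total cone angle, supplied by Lemma \ref{l:totalConeAngle}, and the fact that $S$ lies in a hyperelliptic connected component. Together these pin down the stratum and force exactly the cone-point count in the statement. First I would check that the total number of half-edges in either diagram equals $k$: for $\mathcal{D}^{per}_{(k,p,m)}$ the central vertex contributes $(m+p-1)+y = k-2m-p+1$ half-edges, the $p-1$ periodic non-central vertices contribute $1$ each, and the $m$ minimal non-central vertices contribute $2$ each, summing to $k$; an analogous count works for $\mathcal{D}^{min}_{(k,p,m)}$. Lemma \ref{l:totalConeAngle} then gives that the total cone angle of $S$ is $2\pi k$.

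Next I would assemble the hyperelliptic involutions $\gamma_i$ of the individual pieces $P_{n_i}$ and $M_{n_i}$ into a global involution $\gamma$ on $S$. Each $\gamma_i$ acts as rotation by $\pi$ about the Weierstrass midpoint of any Weierstrass edge of that piece, and because the construction glues two slits so that their midpoints coincide, the local descriptions of $\gamma_i$ and $\gamma_j$ on either side of a glued slit are both rotations by $\pi$ about the same merged point and therefore agree. To verify that the resulting $\gamma$ is actually hyperelliptic, I would run the Euler-characteristic computation from the proof of Lemma \ref{l:propertiesOfPieces} in reverse: each quotient $S_i/\gamma_i$ is a sphere, and each of the $m+p-1$ tree gluings is the connect-sum operation identified in that proof, which subtracts $2$ from $\chi$, yielding $\chi(S/\gamma) = 2(m+p) - 2(m+p-1) = 2$; so $S/\gamma$ is a sphere and $\gamma$ is the hyperelliptic involution of $S$. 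Hence $S$ lies in a hyperelliptic connected component.

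Finally, since the only hyperelliptic components are $\mathcal{H}^{hyp}(2g-2)$, with total cone angle $2\pi(2g-1)$, and $\mathcal{H}^{hyp}(g-1,g-1)$, with total cone angle $2\pi(2g)$, the parity of $k$ uniquely selects one: if $k$ is odd then $S \in \mathcal{H}^{hyp}(k-1)$, forcing a single cone point of angle $2\pi k$, and if $k$ is even then $S \in \mathcal{H}^{hyp}(k/2-1, k/2-1)$, forcing two cone points of the common angle $\pi k$. The main obstacle will be the compatibility check at each glued slit: one must verify that the translation used to glue the two sides of the slit genuinely extends the two local $\pi$-rotations of $\gamma_i$ and $\gamma_j$ to a single isometric $\pi$-rotation throughout a neighborhood of the merged midpoint, rather than merely agreeing at the midpoint itself.
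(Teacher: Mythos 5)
Your opening computation (total number of half-edges equals $k$, hence total cone angle $2\pi k$ by Lemma \ref{l:totalConeAngle}) is correct, and the assembly of a global involution $\gamma$ with spherical quotient is essentially what the paper does later in Corollary \ref{c:realizable}. The gap is the inference ``$S/\gamma$ is a sphere, so $S$ lies in a hyperelliptic connected component, and the parity of $k$ then forces the stratum.'' Admitting an involution with sphere quotient (and even with $2g+2$ fixed points) does not place a translation surface in $\mathcal{H}^{hyp}(2g-2)$ or $\mathcal{H}^{hyp}(g-1,g-1)$: those components are defined by the singularity structure of the quotient quadratic differential --- equivalently, by the number and orders of the zeros of $\omega$ and by how $\gamma$ permutes them --- which is precisely what the lemma is supposed to determine. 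Concretely, for $k=6$ a genus-$3$ surface with hyperelliptic involution, anti-invariant differential, sphere quotient and $8$ branch points could a priori lie in $\mathcal{H}^{hyp}(2,2)$ (two swapped zeros), in the non-hyperelliptic component of $\mathcal{H}(2,2)$ (two zeros fixed at Weierstrass points), or in $\mathcal{H}(3,1)$ (two zeros of unequal order at Weierstrass points); all three satisfy every hypothesis your step 3 uses, yet only the first has ``two equal cone points.'' So knowing hyperellipticity plus total cone angle cannot yield the conclusion, and trying to first place $S$ in a hyperelliptic component is circular, since Corollary \ref{c:realizable} identifies that component by invoking this very lemma.

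What is actually needed, and what the paper does, is a direct count of how the augmented cone points of the pieces get identified in $S$. All true cone points of $S$ arise from the marked points on the two vertical sides of the rectangle representing the central building block: each attached cylinder ($P_1$) forces the top and bottom of its Weierstrass edge to coalesce, removing one marked point per side, while each attached minimal slit torus ($M_2$) removes none. This leaves $k-2p'-2m'$ marked points per side, and Lemma \ref{l:technical} --- which your proposal never uses, and which is the real engine of the proof --- says these are identified into a single class when that number is odd and into two classes of equal size when it is even. Since $2p'+2m'$ is even, the parity agrees with that of $k$, giving the statement. To repair your proof you would need to replace step 3 with an argument of this kind that tracks the identifications explicitly, rather than appealing to membership in a hyperelliptic component.
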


\begin{proof}
Fix any integer $k \geq 3$ and any such diagram $\mathcal{D}$.  Let $p^{\prime}$ be the number of non-central periodic vertices of $\mathcal{D}$, let $m^{\prime}$ be the number of non-central minimal vertices of $\mathcal{D}$, and let $y^{\prime}$ be the number of half-edges (either dotted of solid) incident to the central vertex which are not part of a full edge.  Regardless of whether $\mathcal{D}$ is $m$-central or $p$-central, we have $k=2p^{\prime}+3m^{\prime} + y^{\prime}$.  The central component is either a $P_n$ or a $M_n$ building block for some $n \in \mathbb{N}$.  Each of the $p^{\prime}$ periodic non-central vertices represents a periodic cylinder glued to a Weierstrass edge in the central component.  Gluing a cylinder to a Weierstrass edge in the central component forces the marked points at the top and bottom of the Weierstrass edge to coalesce.  Thus, for each of the $p^{\prime}$ cylinders attached to the central component, the number of marked points on the two vertical boundaries of the rectangle representing the central component decreases by one.  Gluing a minimal slit tori along a Weierstrass edge does not cause any marked points to coalesce.  

The total number of half-edges (counting a full edge as two half-edges) in $\mathcal{D}$ is $k$.  Of these $k$ half-edges, $2p^{\prime}$ are used to connect the central vertex to the non-central periodic vertices, and another $2m^{\prime}$ of the half-edges are incident to non-central minimal vertices (each of the $m^{\prime}$ non-central minimal vertices has a dotted half-edge and a half-edge contained in the full edge connecting the vertex to the central vertex.)  Thus, the number of half-edges incident to the central vertex which do not represent Weierstrass edges whose top and bottom points are forced by cylinders to coalesce is $y^{\prime}+m^{\prime} = k - 2p^{\prime} - 2m^{\prime}$.  This is the number of distinct marked points on each of the two vertical boundaries of the rectangle representing the central component.  By Lemma \ref{l:technical}, these marked points are identified to form one marked point if  $k - 2p^{\prime} - 2m^{\prime}$ is odd and two marked points with equal numbers of preimages if $k - 2p^{\prime} - 2m^{\prime}$ is even.  (The condition that $k \geq 3$ excludes the diagram consisting of two vertices connected by a solid full edge; in this case the total surface is a torus and has no cone points.)  Since $2p^{\prime}+2m^{\prime}$ is even, the total surface therefore has one cone point if $k$ is odd and two equal cone points if $k$ is even.  
  \end{proof}
  
  \begin{corollary} \label{c:realizable}
  Fix an integer $k \geq 3$ and a pair of nonnegative integers $(p,m)$, at least one of which is nonzero and such that $3m+2p-1 \leq k$.  A translation surface surface associated to the invariant component diagram $\mathcal{D}^{per}_{(k,p,m)}$ or $\mathcal{D}^{min}_{(k,p,m)}$ is in $\mathcal{H}^{hyp}(\frac{k}{2}-1,\frac{k}{2}-1)$ if $k$ is even and is in $\mathcal{H}^{hyp}(k-1)$ if $k$ is odd.  Furthermore, such a surface has precisely $p$ periodic components and $m$ minimal components. 
  \end{corollary}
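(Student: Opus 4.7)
The plan is to verify three things about the surface $S$ built from $\mathcal{D}^{per}_{(k,p,m)}$ or $\mathcal{D}^{min}_{(k,p,m)}$: that it lies in the asserted stratum, that it lies in the hyperelliptic component of that stratum, and that its vertical foliation has exactly $p$ periodic and $m$ minimal components.  The stratum identification is immediate from the preceding lemmas.  Either diagram has $k$ half-edges in total, so by Lemma \ref{l:totalConeAngle} the total cone angle of $S$ is $2\pi k$.  Combined with Lemma \ref{l:numberOfConePts}: when $k$ is odd, the single cone point has cone angle $2\pi k$ and hence order $k-1$, so $S \in \mathcal{H}(k-1)$; when $k$ is even, the two equal-angle cone points each have cone angle $\pi k$ and hence order $k/2-1$, so $S \in \mathcal{H}(k/2-1,k/2-1)$.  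The genus $g$ of $S$ is therefore $(k+1)/2$ in the odd case and $k/2$ in the even case.

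For hyperellipticity, I would construct an explicit involution $\sigma$ on $S$ by patching together the half-turn involutions already exhibited on each building block.  Each $P_n$ or $M_n$ admits an isometric involution realized as a half-turn of its rectangular model, and by construction the midpoints of its vertical (augmented) Weierstrass edges are fixed points.  A slit-gluing between two pieces is a translation identification along a pair of such edges, and the piece half-turns on either side agree there (both exchange the two sides of the slit); hence they patch into a single global isometric involution $\sigma$ of $S$.  To conclude that $\sigma$ is the hyperelliptic involution, one verifies that it has exactly $2g+2$ fixed points.  This is carried out by induction on the tree of pieces (Lemma \ref{l:propertiesOfPieces}): each individual block contributes the correct $2h+2$ count by its construction, and each new slit-gluing modifies both the genus and the Weierstrass-point tally in compatible ways — the two consumed edge-midpoints survive as two Weierstrass points of the glued surface — keeping the running equality ``fixed points $= 2g+2$'' intact.

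Finally, the component count follows from the diagram designs together with Lemma \ref{l:allPossibleSaddleConnections}: a piece of $\hat{S}$ yields a periodic component of $S$ iff all its vertical critical leaves are closed, which is the case exactly for $P_n$-type blocks; the $M_n$-type blocks yield minimal components because of their broken augmented saddle connection.  By construction, the vertices carrying a dotted half-edge in either $\mathcal{D}^{per}_{(k,p,m)}$ or $\mathcal{D}^{min}_{(k,p,m)}$ are precisely the $M_n$-blocks, and both templates are arranged to have exactly $m$ such vertices and $p$ vertices with only solid (half-)edges, so $S$ has the claimed $m$ minimal and $p$ periodic components.

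The main technical obstacle is the fixed-point bookkeeping in the hyperellipticity step, in particular tracking how the Weierstrass midpoints on either side of each Weierstrass-edge slit recombine under the translation identification; everything else reduces to direct appeals to the lemmas already established.
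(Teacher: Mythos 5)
Your overall strategy coincides with the paper's: identify the stratum via Lemma \ref{l:totalConeAngle} and Lemma \ref{l:numberOfConePts}, establish hyperellipticity by patching the half-turn involutions of the building blocks and counting fixed points, and read off the component count from the blocks. But the fixed-point bookkeeping --- which you correctly single out as the crux --- is wrong as stated. When a Weierstrass edge in a block $B_1$ is slit open, its midpoint splits into two points, one on each side of the slit, and the half-turn of $B_1$ \emph{exchanges} these two points (it rotates the edge about its midpoint, swapping the two sides). After the translation gluing to the slit in $B_2$, these become two distinct points of $S$ that are interchanged, not fixed, by the patched involution $\sigma$; the same happens to the midpoint of the edge in $B_2$. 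So the consumed edge-midpoints do \emph{not} survive as Weierstrass points: each gluing destroys one fixed point in each of the two blocks involved. With your claim that they survive, the invariant ``fixed points $=2g+2$'' already fails at the first gluing: you would get $(2g_{B_1}+2)+(2g_{B_2}+2)=2(g_{B_1}+g_{B_2})+4$ fixed points on a surface of genus $g_{B_1}+g_{B_2}$ (the slit-gluing is a connected sum, so genera add), i.e.\ $2g+4$, and with $n$ blocks you would get $2g+2n$.

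The correct count, which is what the paper uses, is $(2g_{B_1}+2)+(2g_{B_2}+2)-2=2(g_{B_1}+g_{B_2})+2=2g+2$, and inductively $\sum_i(2g_{B_i}+2)-2(n-1)=2g+2$, since the tree structure supplies exactly $n-1$ gluings and $g=\sum_i g_{B_i}$. Once this is repaired, the rest of your argument (the stratum identification from the total cone angle $2\pi k$ together with the one-or-two cone point dichotomy, and the identification of periodic components with $P_n$-blocks and minimal components with $M_n$-blocks) matches the paper and is fine.
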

  
  \begin{proof}
Let $S$ be the translation surface associated to one of these diagrams.  Every building block $B$ used to construct $S$ is a hyperelliptic surface with hyperelliptic involution $\gamma_B$ which fixes $2g_B+2$ points, where $g_B$ is the genus of $B$.  When we glue two building blocks $B_1$ and $B_2$ together, we cut a slit along a Weierstrass edge in each; this destroys one fixed point in each building block.  Since $\gamma_{B_1}$ and $\gamma_{B_2}$ agree along the slits, we can define an involution $\gamma_{B_1 \sqcup B_2} $ on the total surface by defining $\gamma_{B_1 \sqcup B_2}$ piecewise as whichever of $\gamma_{B_1}$ and $\gamma_{B_2}$ is defined.  Thus $\gamma_{B_1 \sqcup B_2}$  fixes $2(g_{B_1}+g_{B_2})+2$ points and is the hyperelliptic involution on the total surface.  

Similarly, when gluing together $n$ building blocks $B_1,...,B_n$ along Weierstrass edges, we can define a hyperelliptic involution $\gamma_{\sqcup_i B_i}$ on the total surface  by defining $\gamma_{\sqcup_i B_i}$ piecewise to be whichever $\gamma_{B_i}$ is defined.  The involution $\gamma_{\sqcup_i B_i}$ fixes $2(g_{B_1} + ... + g_{B_n})+2$ points.  Since every invariant component diagram $\mathcal{D}^{per}_{(k,p,m)}$ or $\mathcal{D}^{min}_{(k,p,m)}$ is a tree, the genus of any surface associated to such a diagram is the sum of the genera of the building blocks.  Consequently $\gamma_{\sqcup_i B_i}$ fixes $2g+2$ points, where $g$ is the genus of the total surface.  

The integer $k$ is the total number of half-edges (counting a full edge as two half-edges) in the invariant component diagram.  
By Lemma  \ref{l:totalConeAngle}, the total cone angle (the sum over the cone points of $S$ of the cone angle at that point) of $S$ is $2\pi k$.  A translation surface in the stratum $\mathcal{H}(2g-2)$ has total cone angle $2\pi(2g-1)$ and a translation surface in the stratum $\mathcal{H}(g-1,g-1)$ has total cone angle $2\pi(2g)$.  Then by Lemma \ref{l:numberOfConePts}, if $k$ is odd, the surface $S$ has genus $\frac{k+1}{2}$ and is in $\mathcal{H}^{hyp}(k-1)$; if $k$ is even, the surface $S$ has genus $\frac{k}{2}$ and is in $\mathcal{H}^{hyp}(\frac{k}{2}-1,\frac{k}{2}-1)$.   
  \end{proof}

\begin{proof}[Proof of Theorem \ref{t:maintheorem}]
The upper bounds in Theorem \ref{t:maintheorem} are given in Proposition \ref{p:upperBounds}.   Corollary \ref{c:realizable} proves that given any hyperelliptic connected component ($\mathcal{H}^{hyp}(2g-2)$ or $\mathcal{H}^{hyp}(g-1,g-1)$) for the moduli space of translation surfaces of genus $g \geq 2$, and any pair $(p,m)$ satisfying the upper bounds, there exists a translation surface in that hyperelliptic connected component with precisely $p$ periodic components and $m$ minimal components. 

Although the case $g=1$, $m=0$, $p=2$ satisfies the bound $3m+2p-1 \leq 2g$, no translation surface realizes this pair, since every flat torus consists of a single component.  In this case, the bound ``sees" the torus as consisting of two copies of $P_1$.  However, since the augmented ``cone points" on the boundaries of those cylinders are not true cone points (i.e. do not have cone angle greater than $2\pi$), the surface has no saddle connections, and thus the ``two cylinders" form a single periodic component.   
\end{proof}


\bibliographystyle{amsalpha}
\bibliography{InvariantComponentsBibliography}


\end{document}